\newcommand{\tss}{\textsuperscript}
\newcommand{\w}{\omega}
\newcommand{\mc}[1]{\mathcal{#1}}
\newcommand{\set}[1]{\{#1\}}
\newcommand{\st}{\,\mathrm{:}\,}
\newcommand{\Qc}{\mathbb{P}}
\newcommand{\Q}{\mathbb{Q}}
\newcommand{\R}{\mathbb{R}}
\let\phi\varphi
\newcommand{\scpt}{$\sigma$-compact}
\newcommand{\coa}{co-analytic}
\newcommand{\proj}{projective}
\newcommand{\gdelta}{$G_\delta$}
\newcommand{\VL}{$\bm{V = L}$}
\renewcommand{\bar}{\overline}
\newcommand\rsetminus{\mathbin{\mathpalette\rsetminusaux\relax}}
\newcommand\rsetminusaux[2]{\mspace{-3mu}\raisebox{\rsmraise{#1}\depth}{\rotatebox[origin=c]{-35}{$#1\smallsetminus$}}\mspace{-3mu}}
\newcommand\rsmraise[1]{%
  \ifx#1\displaystyle 1 \else
    \ifx#1\textstyle .3 \else
      \ifx#1\scriptstyle .2 \else
        .1%
      \fi
    \fi
  \fi}
\let\setminus\bigsetminus
\theoremstyle{plain}      
\theoremstyle{definition} 
\newtheorem{thrm}{Theorem}[section]
\newtheorem*{thrm*}{Theorem}
\newtheorem{cor}[thrm]{Corollary}
\newtheorem{lemma}[thrm]{Lemma}
\newtheorem*{lemma*}{Lemma}
\newtheorem{propn}[thrm]{Proposition}
\newtheorem*{propn*}{Proposition}
\newtheorem{defn}{Definition}[section]
\newtheorem*{defn*}{Definition}
\newtheorem*{rmk*}{Remark}
\newtheorem{probm}{Problem}
\theoremstyle{remark}     
\newtheoremstyle{cited}
  {3pt}
  {3pt}
  {}
  {}
  {\bfseries}
  {.}
  {.5em}
  {\thmname{#1} \thmnumber{#2} \thmnote{\normalfont#3}}
\theoremstyle{cited}
\newtheorem{cthrm}[thrm]{Theorem}
\newtheorem{ccor}[thrm]{Corollary}
\newtheorem{cpropn}[thrm]{Proposition}
\newtheorem{clemma}[thrm]{Lemma}
\begin{document} 
\title{Completely Baire spaces, Menger spaces, and projective sets}
\author{Franklin D. Tall{$^1$} \and Lyubomyr Zdomskyy{$^2$}}
\footnotetext[1]{Research supported by NSERC Grant A-7354.}
\footnotetext[2]{The second author thanks the Austrian Science Fund FWF (Grant I 2374 N35) for generous support for this research.}

\date{\today}

\keywords{Menger, \scpt, Baire, completely Baire, Polish, analytic, co-analytic, projective, Open Graph Axiom}

\subjclass[2010]{54A35, 03E35, 54D45, 03F15, 54F52, 54E50, 54G20, 03C95, 03E15, 03C95, 03E60, 54G20}

\begin{abstract} 
W.~Hurewicz proved that analytic Menger sets of reals are \scpt{} and that co-analytic completely Baire sets of reals are completely metrizable. 
It is natural to try to generalize these theorems to projective sets. 
This has previously been accomplished by $\bm{V = L}$ for projective counterexamples, 
and the {\bf Axiom of Projective Determinacy} for positive results.
For the first problem, the first author, S.~Todorcevic, and S.~Tokg\"oz have produced a finer analysis with much weaker axioms.
We produce a similar analysis for the second problem, showing the two problems are essentially equivalent. 
We also construct in ZFC a separable metrizable space with $\w$\tss{th} power completely Baire, yet lacking a dense completely metrizable subspace.
This answers a question of Eagle and Tall in {\it Abstract Model Theory}.
\end{abstract}
\maketitle

\medskip

\section{Introduction}
It is a common theme in {\it Descriptive Set Theory} that statements about simply definable sets are true, 
e.g.~``All Borel sets are Lebesgue measurable,'' 
but that the Axiom of Choice entails the existence of a non-constructive counterexample:
``There is a non-measurable set of reals.''
For subsets of $\R$ that are definable but not so simply, the situation is more complex; 
e.g.~$\bm{V = L}$ implies there is a continuous image of the complement of a continuous image of a Borel set that is not measurable, 
but the {\bf Axiom of Projective Determinacy} ({\bf PD}) implies that the class of subsets of $\R$ obtained by closing the class of Borel sets under complement and continuous image contains only measurable sets. 
Among other places, these phenomena have been investigated with regard to two classical theorems of W.~Hurewicz. 
We first give the relevant definitions.

\begin{defn}
A topological space is {\bf Menger} if whenever $\set{\mc{U}_n}_{n < \w}$ is a sequence of open covers, there exists $\set{\mc{V}_n}_{n < \w}$, $\mc{V}_n \subseteq \mc{U}_n$, $\mc{V}_n$ finite, such that $\set{\bigcup \mc{V}_n \st n < \w}$ is a cover.
\end{defn}

\begin{defn}
A subset $A$ of $\R$ is {\bf analytic} if it is a continuous image of a Borel set. 
$C \subseteq \R$ is {\bf \coa} if $\R \setminus A$ is analytic.
$P \subseteq \R$ is {\bf \proj} if it is in the class of subsets of $\R$ obtained by closing the Borel sets under complementation and continuous real-valued image.
\end{defn}

\begin{defn}
A topological space is {\bf Baire} if the intersection of any countable family of dense open sets is dense. 
A space is {\bf completely Baire} if each closed subspace is Baire.
\end{defn}

\begin{defn}
A topological space is {\bf Polish} if it is separable and completely metrizable.
\end{defn}

\begin{cpropn}[\cite{H1}]
Analytic Menger subsets of $\R$ are \scpt.
\end{cpropn}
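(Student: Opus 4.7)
The plan is to combine two ingredients: the \emph{Hurewicz dichotomy} for analytic sets---that an analytic subset of a Polish space is either \scpt{} or contains a relatively closed copy of the Baire space $\w^\w$---and the observation that $\w^\w$ itself is not Menger. Since the Menger property passes to closed subspaces (given open covers $\mc{U}_n$ of a closed $C \subseteq X$, lift each $U \in \mc{U}_n$ to an open $\tilde U \subseteq X$ with $\tilde U \cap C = U$, adjoin $X \setminus C$, apply Menger in $X$, and restrict the finite subfamilies back to $C$), this suffices: an analytic Menger $A \subseteq \R$ cannot contain a relatively closed copy of the non-Menger $\w^\w$, so the dichotomy forces $A$ to be \scpt.

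For the second ingredient, fix for each $n < \w$ the clopen cover $\mc{U}_n = \set{V_k^n \st k < \w}$ of $\w^\w$, where $V_k^n = \set{f \in \w^\w \st f(n) = k}$. Given any finite $\mc{V}_n \subseteq \mc{U}_n$, put $F_n = \set{k \st V_k^n \in \mc{V}_n}$ and define $f \in \w^\w$ by $f(n) = \max F_n + 1$. Then $f \notin \bigcup \mc{V}_n$ for every $n$, so $\set{\bigcup \mc{V}_n \st n < \w}$ fails to cover $\w^\w$, and the Menger condition cannot be met.

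The hard part, and what Hurewicz genuinely established in \cite{H1}, is the dichotomy itself. I would approach it through the tree representation of analytic sets: write $A$ as the first-coordinate projection of $[T]$ for some tree $T \subseteq (\w \times \w)^{<\w}$, and classify nodes $s \in \w^{<\w}$ of the projection tree according to whether the analytic subset of $A$ that $s$ parametrizes is \scpt. A K\"onig-style pruning argument then either exhibits, at a ``bad'' root, a closed embedding $\w^\w \hookrightarrow A$ constructed by choosing a bad extension for every finite sequence $s$, or---if every node is eventually ``good''---decomposes $A$ into countably many compact pieces. The reduction from an arbitrary analytic subset of $\R$ to this $\w^\w$-setup is routine, since the closed copy of $\w^\w$ produced inside $A$ need only be closed in $A$, not in $\R$.
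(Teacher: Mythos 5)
Your overall architecture is sound and is in fact how the modern literature derives this theorem (the paper itself offers no proof, only the citation to Hurewicz): the two elementary ingredients --- that the Menger property is inherited by closed subspaces, and that $\w^\w$ is not Menger --- are both stated and proved correctly, and the reduction of the proposition to the dichotomy is valid. One caveat of bookkeeping before the substantive one: the ``Hurewicz dichotomy'' most often quoted in the literature says that an analytic $A$ is either \emph{contained in} a $\sigma$-compact set or contains a copy of $\w^\w$ \emph{closed in the ambient Polish space}; that version is vacuous for $A\subseteq[0,1]$ (every subset is contained in a compact set, and no closed subset of $[0,1]$ is a copy of the non-compact $\w^\w$) and so does not yield the proposition. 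What you need, and what you in fact stated, is the relative version --- $A$ is \scpt{} or contains a copy of $\w^\w$ closed \emph{in $A$}. That version is true for analytic sets (for Polish $A$ it is Hurewicz's classical characterization of $\sigma$-compactness), but it is a genuinely stronger statement and is exactly where the work lies.

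The substantive gap is in your sketch of that relative dichotomy. ``Choosing a bad extension for every finite sequence $s$'' produces, at best, a continuous injection of $\w^\w$ into $A$; but \emph{every} uncountable analytic set admits such an injection (take a Cantor set $C\subseteq A$ and delete a countable dense subset of $C$), so this alone proves nothing. The entire difficulty is making the image closed in $A$, and that requires two things your outline omits. First, the scheme $(u_\sigma)_{\sigma\in\w^{<\w}}$ must be built so that for each $\sigma$ the basic sets chosen for the successors $\sigma^\frown i$, $i<\w$, accumulate only at a single point $z_\sigma\notin A$; this is where non-$\sigma$-compactness is actually used (one shows that if the piece of $A$ parametrized by a node is not covered by a $\sigma$-compact subset of $A$, then the closed set of points at which it is locally non-$\sigma$-compact modulo $A$ cannot be contained in $A$, and one branches off along such an outside point). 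Without this, a sequence escaping to infinity in $\w^\w$ can have its image converge to a point of $A$ outside the copy, destroying closedness. Second, the branches of the scheme must converge to points \emph{of} $A$, which for a non-Borel analytic $A$ forces you to carry strictly increasing second coordinates in the tree $T$ on $\w\times\w$ throughout the construction; you mention the tree representation but do not put it to this use. With these two points supplied the argument is the standard one, but as written the sketch skips precisely the steps that separate Hurewicz's theorem from a triviality.
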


\begin{cpropn}[\cite{H2}]
Co-analytic completely Baire subsets of $\R$ are Polish.
\end{cpropn}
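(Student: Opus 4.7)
The plan is to combine two classical ingredients. First, the rationals $\Q$ are not a Baire space in themselves: as a countable union of the nowhere dense singletons $\set{q}$, $\Q$ is meager in itself. Hence any space that contains a closed copy of $\Q$ fails to be completely Baire. Second, I would invoke (and prove) the \emph{Hurewicz dichotomy for co-analytic sets}: a co-analytic $C \subseteq \R$ is Polish (equivalently $G_\delta$ in $\R$, by the Alexandroff--Mazurkiewicz characterization) if and only if it contains no closed copy of $\Q$. Granting the dichotomy, the proposition is immediate: a completely Baire $C$ admits no closed copy of $\Q$, so it must be $G_\delta$ in $\R$, hence Polish.

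The real content is the dichotomy. I would represent the analytic complement as $\R \setminus C = p[T]$ for a tree $T \subseteq (\omega \times \omega)^{<\omega}$, and for each $x \in C$ define $\phi(x)$ to be the rank of the well-founded section tree $T_x = \set{s \in \omega^{<\omega} \st (x{\restr}|s|, s) \in T}$. This is the canonical $\Pi^1_1$-rank on $C$; the level sets $C_\alpha = \set{x \in C \st \phi(x) \leq \alpha}$ are Borel and exhaust $C$ as $\alpha$ ranges over $\omega_1$.

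The argument now splits on whether $\phi$ is bounded on $C$. If it is bounded, then $C$ is itself Borel, and the completely Baire hypothesis together with a Baire category argument inside $C$ (eliminating the first-category part locally) forces $C$ to be $G_\delta$ in $\R$. If $\phi$ is unbounded, I would recursively construct a countable dense-in-itself set $D \subseteq C$ indexed by $\Q$, at each stage choosing the next point to have $\phi$-value exceeding any prescribed countable ordinal; by the Boundedness Theorem for $\Pi^1_1$-ranks, the accumulation points of $D$ in $\R$ will all land outside $C$, so $D$ is closed in $C$ and homeomorphic to $\Q$.

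The main obstacle is precisely this recursive construction in the unbounded case: one must simultaneously guarantee that $D$ is dense-in-itself (a topological/combinatorial condition on how points are chosen in small neighborhoods) and that $D$ is closed in $C$ (a descriptive-set-theoretic condition forcing limit points to escape $C$). The leverage comes from the unboundedness of $\phi$ on every relatively open subset of $C$ that meets the closure of $D$, so that one can always pick the next point of large enough rank that any limit of the growing sequence acquires rank $\omega_1$, i.e.\ leaves $C$.
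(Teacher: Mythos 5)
Your overall reduction is the right one, and note that the paper does not prove this proposition at all --- it cites it to Hurewicz [H2]. Granting the dichotomy ``a co-analytic $C\subseteq\R$ either is $G_\delta$ in $\R$ or contains a closed copy of $\Q$,'' the statement follows immediately, since $\Q$ is meager in itself and so a closed copy of it would be a closed non-Baire subspace. The genuine gap is that your proposed proof of the dichotomy does not work, and the bounded/unbounded split of the canonical $\Pi^1_1$-rank is not the axis along which the theorem can be proved.

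Concretely: (1) In the bounded case you are reduced to a Borel completely Baire set and assert that ``a Baire category argument inside $C$'' makes it $G_\delta$. That assertion is essentially the full theorem restricted to Borel sets and is not elementary; moreover $C=\Q$ itself has bounded rank, so in the bounded case you must still be able to manufacture the closed copy of $\Q$, yet your only mechanism for doing so lives in the unbounded case. (2) In the unbounded case the key step fails. Any countable $D\subseteq C$ automatically satisfies $\sup_{x\in D}\phi(x)<\omega_1$, so ``choosing points of rank exceeding any prescribed countable ordinal'' at countably many stages produces nothing that the Boundedness Theorem can detect: that theorem bounds $\phi$ on analytic subsets of $C$, and says nothing about limits of sequences of large rank. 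Nor is $\phi$ semicontinuous in the needed sense: if $x_n\to y$ with $\phi(x_n)$ large, the section trees $T_{x_n}$ agree with $T_y$ only on initial segments, which places no constraint on the rank of $T_y$, so $y$ may perfectly well lie in $C$. The actual proofs (Hurewicz, Saint-Raymond; see Kechris \S 21 or van Mill's book, both in the paper's bibliography) run a derivative/tree argument on the representation of the analytic complement, or unravel a closed game; they do not case-split on the rank. As written, neither of your cases closes.
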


\begin{probm}
Are ``definable'' Menger subsets of $\R$ \scpt?
\end{probm}

\begin{probm}
Are ``definable'' completely Baire subsets of $\R$ Polish?
\end{probm} 

\section{Results from the literature}

We refer to \cite{Kec95} for descriptive set theory, and to \cite{Kan} for \VL{} and large cardinals.
Problem 1 was investigated by Miller and Fremlin \cite{MF} in 1988. 
They proved that:

\begin{propn}
\VL{} implies there is a co-analytic Menger set of reals that is not \scpt.
\end{propn}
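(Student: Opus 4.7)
The plan is to construct the desired set by transfinite recursion along the canonical $\Sigma^1_2$-good well-ordering $<_L$ of the reals provided by \VL, in such a way that the final set is $\Pi^1_1$ because membership is equivalent to a $\Pi^1_1$ statement about the $<_L$-initial segment. I would work inside $\w^\w$ and use Hurewicz's characterizations in that setting: a zero-dimensional separable metric space $X$ is \scpt{} iff every continuous image of $X$ in $\w^\w$ is bounded, and $X$ is Menger iff no continuous image of $X$ in $\w^\w$ is dominating. So the goal is to produce a \coa{} $X \subseteq \w^\w$ that is unbounded (hence not \scpt{}) but such that every continuous $f \st X \to \w^\w$ has non-dominating image (hence Menger).

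First, I would fix enumerations $\set{f_\alpha \st \alpha < \w_1}$ of the continuous partial functions $\w^\w \to \w^\w$ (coded by reals) and $\set{K_\alpha \st \alpha < \w_1}$ of the compact subsets of $\w^\w$, both listed in $<_L$-order so that the initial segments are $\Sigma^1_2$-definable uniformly. At stage $\alpha$ I would choose $x_\alpha \in \w^\w$ satisfying two requirements. The positive requirement is that $x_\alpha$ eventually dominates every element enumerated so far, which keeps the growing set unbounded. The negative requirement is that for every $\beta \leq \alpha$ already handled, $x_\alpha$ avoids the (closed nowhere dense, by a standard diagonalization) set of points whose $f_\beta$-value eventually dominates a chosen escape function $g_\beta$; concretely, $g_\beta$ is picked after stage $\beta$ so that $f_\beta(\set{x_\gamma \st \gamma \leq \beta})$ does not dominate $g_\beta$, and we then preserve this at all later stages. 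Because only countably many constraints are active, and each forbids a meager/bounded set, $<_L$-least legal choices exist at every stage.

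Setting $X = \set{x_\alpha \st \alpha < \w_1}$, the unboundedness is immediate from the positive requirement and gives non-$\sigma$-compactness. For the Menger property, any continuous $f \st X \to \w^\w$ extends to some $f_\beta$ on a $G_\delta$ superset, and by construction $f_\beta[X]$ is contained in the set of reals not dominating $g_\beta$, so $f[X]$ is non-dominating. Finally, $X$ is \coa{} because ``$y \in X$'' can be rewritten, using the $\Sigma^1_2$-good well-ordering, as: for every real $z$ coding a countable $<_L$-initial segment of constraints, either $z$ fails to code correctly, or $y$ is the $<_L$-least legal choice at its stage — a $\Pi^1_1$ statement.

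The main obstacle is the bookkeeping that simultaneously keeps the negative (Menger) requirements consistent with the positive (unboundedness) requirement while preserving $\Sigma^1_2$-definability of each stage; in particular, one must verify that the set of $x_\alpha$ satisfying all active constraints is $\Sigma^1_2$-definable from parameters and nonempty, so that picking its $<_L$-least element gives a uniform $\Sigma^1_2$ recursion whose graph is $\Pi^1_1$. Once this verification is in hand, the classical Miller--Fremlin argument in \cite{MF} goes through essentially unchanged.
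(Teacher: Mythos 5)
The paper does not prove this proposition at all---it is quoted from Miller and Fremlin \cite{MF}---so your proposal must be measured against the argument there. Your skeleton is the right one: a length-$\w_1$ recursion along the $\Sigma^1_2$-good well-ordering of $L$, producing a set that is unbounded in $(\w^\w,\le^*)$ (hence not \scpt) and all of whose continuous images in $\w^\w$ are non-dominating (hence Menger, by Hurewicz's characterization). But the two steps that carry all the weight are asserted rather than proved, and the justifications you give for them fail. The combinatorial heart is the claim that at stage $\alpha$ there exists $x_\alpha$ that eventually dominates every $x_\gamma$, $\gamma<\alpha$, while avoiding each set $B_\beta=\set{x\in\op{dom}(f_\beta)\st f_\beta(x)\ge^* g_\beta}$. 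You justify this by saying each constraint ``forbids a meager/bounded set.'' That is doubly wrong: the sets $B_\beta$ are $F_\sigma$, not closed, and in general are neither nowhere dense nor meager (take $f_\beta$ constant, or a homeomorphism carrying a cone onto a cone); and, more fatally, the set of candidates meeting your positive requirement, $\set{x\st x\ge^* x_\gamma\text{ for all }\gamma<\alpha}$, is itself meager in $\w^\w$, so counting the forbidden sets by Baire category proves nothing about nonemptiness of the legal region. Reconciling the positive and negative requirements is exactly the content of the theorem. The proofs in the literature do not diagonalize against all continuous functions: they build a $\le^*$-increasing unbounded $\w_1$-sequence and deduce the Menger (indeed Hurewicz) property from a ZFC lemma about such scale-like sets (or from a concentration argument), so that Mengerness falls out of the combinatorial structure rather than from function-by-function bookkeeping.

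The second gap is the complexity computation. Your rewriting of ``$y\in X$'' universally quantifies over reals $z$ ``coding a countable $<_L$-initial segment''; since $<_L$ is only $\Sigma^1_2$, the clause ``$z$ fails to code correctly'' is $\Pi^1_2$, and the displayed formula lands in $\Pi^1_2$, not $\Pi^1_1$. As written you get at best a $\Sigma^1_2$ (or $\Pi^1_2$) Menger non-\scpt{} set, which is a strictly weaker conclusion. Obtaining a genuinely \coa{} set requires the Erd\H{o}s--Kunen--Mauldin coding used by Miller and Fremlin: one arranges that each $x_\alpha$ explicitly codes a countable level $L_{\gamma_\alpha}$ of $L$ containing the entire construction below stage $\alpha$, so that membership becomes ``$y$ codes a well-founded model with the right internal first-order properties and is the designated witness there''; well-foundedness is $\Pi^1_1$ and condensation guarantees the coded construction agrees with the true one. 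This is a specific technique, not a routine rewriting, and it is the part of the proof that cannot be waved through. (A smaller slip: ``every continuous image in $\w^\w$ is bounded'' characterizes the Hurewicz property, not \scpt ness; you only use the correct implication that \scpt{} subsets of $\w^\w$ are bounded, but conflating the two is dangerous here, since the gap between them is precisely what the proposition exhibits.)
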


\begin{propn}
{\bf PD} implies every projective Menger set of reals is \scpt.
\end{propn}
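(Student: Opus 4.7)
The plan is to reduce the theorem to a projective analogue of Hurewicz's dichotomy. Hurewicz showed that every analytic subset of $\w^\w$ is either \scpt{} or contains a relatively closed copy of $\w^\w$; since $\w^\w$ is not Menger and the Menger property is hereditary to closed subspaces, this dichotomy is precisely what underlies the first cited proposition. The strategy is to run that same two-line argument for projective $X$, using {\bf PD} to supply the dichotomy at the projective level.

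The central lemma to establish is therefore that {\bf PD} implies every projective $X\subseteq\w^\w$ is either \scpt{} or contains a relatively closed copy of $\w^\w$. I would prove this via the classical Hurewicz-style game $G(X)$ on $\w$, in which the two players alternately build a branch $x\in\w^\w$ together with a decreasing sequence of basic clopen sets; the payoff is arranged so that a winning strategy for one player unpacks into a continuous closed embedding $\w^\w\hookrightarrow X$, while a winning strategy for the other unpacks into a countable family of compact subsets of $\w^\w$ whose union contains $X$. The payoff set of $G(X)$ is of the same projective class as $X$, so {\bf PD} provides its determinacy, and the dichotomy follows.

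Granted the lemma, the theorem itself is a one-liner: a projective Menger $X\subseteq\R$ cannot contain a closed copy of $\w^\w$ (since $\w^\w$ is not Menger and Menger passes to closed subspaces), hence by the dichotomy $X$ is \scpt.

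The main technical obstacle is the strategy analysis in $G(X)$: verifying that a winning strategy for one player genuinely yields a \scpt{} cover of \emph{all} of $X$, and that a winning strategy for the other genuinely yields a \emph{relatively closed} embedding of $\w^\w$ into $X$. This bookkeeping is formally the same as in the analytic proof (where Martin's analytic determinacy plays the role of {\bf PD}); the only new content on the projective side is that {\bf PD} now makes the same game-theoretic input available for projective payoff sets.
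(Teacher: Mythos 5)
The paper offers no proof of this proposition: it is quoted directly from Miller and Fremlin \cite{MF}, so there is nothing in the text to compare against. Your outline is the standard route from that literature---reduce to a Hurewicz-type dichotomy, obtain the dichotomy for projective sets by determinacy of a suitably coded game on $\w$ whose payoff is projective whenever $X$ is---and the reduction itself (Menger is closed-hereditary, $\w^\w$ is not Menger) is correct.

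There is, however, one genuine gap in the outline, and it sits exactly at the ``bookkeeping'' you defer. In your strategy analysis the covering player's winning strategy ``unpacks into a countable family of compact subsets of $\w^\w$ whose union \emph{contains} $X$,'' i.e.\ the game as you describe it yields only that $X$ is contained in a $\sigma$-compact set, which is the classical Hurewicz--Saint-Raymond dichotomy in its \emph{absolute} form. That is strictly weaker than the first horn of the dichotomy you state ($X$ \emph{is} $\sigma$-compact): the irrationals in $[0,1]$ lie inside a compact set without being $\sigma$-compact, and ``Menger together with contained in a $K_\sigma$'' does not imply $\sigma$-compact in ZFC (an uncountable Lusin set is Rothberger, hence Menger, lies in $\R$, and contains no uncountable compact set). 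So your concluding one-liner needs the \emph{relative} form of the dichotomy: either $X$ is $\sigma$-compact or $X$ contains a copy of $\w^\w$ closed \emph{in $X$}. Getting that out of the game requires an extra localization step beyond unpacking the strategy---for instance, first pass to the (closed, nonempty by Lindel\"ofness) set of points of $\overline{X}$ at which $X$ is nowhere locally $\sigma$-compact and run the game there, so that the embedded $\w^\w$ lands inside $X$ and is relatively closed. Alternatively, one can sidestep the issue entirely by playing the Menger game itself: Hurewicz's theorem says $X$ is Menger iff ONE has no winning strategy, Telg\'arsky--Scheepers says TWO has a winning strategy iff the metrizable space $X$ is $\sigma$-compact (with the compact pieces inside $X$), and the payoff is again projective for projective $X$, so {\bf PD} closes the circle. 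Either repair is routine, but as written your argument only shows that projective Menger sets are contained in $\sigma$-compact sets.
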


{\bf PD} is regarded as ``true'' by many descriptive set theorists but has quite large cardinal consistency strength. 
\cite{MF} was extended by the first author and S.~Tokg\"oz to consider spaces that were not necessarily metrizable \cite{TT}.
They also noted that: 

\begin{propn}
The {\bf Axiom of Co-analytic Determinacy} ($\bm{\Pi_1^1}$-Determinacy) implies co-analytic Menger sets of reals are \scpt.
\end{propn}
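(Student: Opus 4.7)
The plan is to derive this from the Hurewicz-type dichotomy for co-analytic sets: assuming $\bm{\Pi_1^1}$-Determinacy, every co-analytic $A \subseteq \R$ is either \scpt{} or contains a closed subspace homeomorphic to $\w^\w$.

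Granting this dichotomy, the conclusion is almost immediate. Let $A \subseteq \R$ be co-analytic and Menger. First observe that $\w^\w$ itself is not Menger: taking the covers $\mc{U}_n = \set{[s] \st s \in \w^{n+1}}$ and any finite $\mc{V}_n \subseteq \mc{U}_n$, pick $f(n)$ strictly exceeding the last coordinate of every stem in $\mc{V}_n$; then $(f(0), f(1), \dots)$ avoids $\bigcup_n \bigcup \mc{V}_n$. Since being Menger passes to closed subspaces, $A$ cannot contain a closed copy of $\w^\w$, so the dichotomy forces $A$ to be \scpt.

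The real work lies in the dichotomy. My approach is to play an unfolded Hurewicz-style game in which Player I enumerates a point $x \in \w^\w$ and Player II responds with compact subsets $K_n \subseteq \R$, II winning iff either $x$ fails to code a point of $A$ or else the coded point lies in $\bigcup_n K_n$. Using a $\bm{\Pi_1^1}$-norm (or tree representation) on $A$ to code membership, the payoff set can be arranged to be $\bm{\Pi_1^1}$, so $\bm{\Pi_1^1}$-Determinacy applies. A winning strategy for Player II can be spread over a suitable dense set of plays to manufacture a \scpt{} covering of $A$; a winning strategy for Player I, after a standard pruning and fusion, yields a closed embedding of $\w^\w$ into $A$.

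The principal obstacle is the game design: one must arrange both that the payoff set truly falls in $\bm{\Pi_1^1}$ (rather than slipping into a higher pointclass once II's compact-set moves are coded) and that Player I's strategy can be upgraded from a continuous injection to a genuine closed embedding of $\w^\w$ into $A$. These are standard techniques in the Kechris--Saint Raymond circle of dichotomy theorems, and the resulting argument is precisely the co-analytic analogue of the ZFC Hurewicz theorem already cited for analytic sets.
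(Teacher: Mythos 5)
The paper does not actually prove this proposition---it is quoted from \cite{TT} in the ``results from the literature'' section---so your sketch has to stand on its own. The top layer is fine: your diagonalization showing $\w^\w$ is not Menger is correct, Menger is inherited by closed subspaces, and so the \emph{strong} dichotomy ``either $A$ is \scpt{} or $A$ contains a relatively closed copy of $\w^\w$'' immediately yields the proposition. The gap is that the game you describe does not prove that dichotomy. In the covering game where II wins iff the point coded by I either fails to lie in $A$ or lands in $\bigcup_n K_n$, a winning strategy for II yields only that $A$ is \emph{contained in} a \scpt{} set (the union of the compacta II's strategy plays against the countably many finite positions). For $A \subseteq \R$ this conclusion is vacuous, since $\R$ is itself \scpt; and ``Menger and covered by a $K_\sigma$'' does not imply \scpt{} (there are, even in ZFC, Menger non-\scpt{} sets of reals, all of them trivially covered by the $K_\sigma$ set $\R$). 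The entire difficulty of the theorem lives in the distance between ``$A$ is \scpt'' and ``$A$ is covered by a $K_\sigma$,'' and your game, as designed, cannot see that distance.

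To repair this you must make the game sensitive to the complement of $A$. One standard fix is the Saint-Raymond/Kechris--Louveau--Woodin separation game (see the discussion around \cite[21.18--21.22]{Kec95}): II tries to separate $A$ from $\R \setminus A$ by a $K_\sigma$, while I tries to build a Cantor set $K$ with $K \setminus A$ countable dense in $K$. A win for II now produces a $K_\sigma$ set \emph{equal} to $A$, and a win for I produces a compact $K$ with $K \cap A$ a relatively closed copy of $\w^\w$ inside $A$; it is here that co-analyticity of $A$ is genuinely used (so that $\R \setminus A$ is analytic and the payoff stays $\bm{\Sigma_1^1}$/$\bm{\Pi_1^1}$, within reach of $\bm{\Pi_1^1}$-Determinacy). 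The route actually taken in \cite{MF} and \cite{TT} is cleaner still and bypasses the dichotomy altogether: code the Menger game on $A$ (ONE plays countable open covers by basic sets, TWO plays finite subfamilies) as a game on $\w$; for co-analytic $A$ its payoff is $\bm{\Pi_1^1}$, so $\bm{\Pi_1^1}$-Determinacy determines it; Hurewicz's theorem says $A$ is Menger iff ONE has no winning strategy, and the Telg\'arsky--Scheepers theorem says TWO has a winning strategy iff $A$ is \scpt. Either repair is standard, but as written your II-wins case does not deliver that $A$ is \scpt.
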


\begin{cor}
If there is a measurable cardinal, then co-analytic Menger sets of reals are \scpt.
\end{cor}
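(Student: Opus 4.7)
The plan is to derive this corollary by combining the immediately preceding proposition with the classical theorem of D.~A.~Martin (1970) stating that the existence of a measurable cardinal implies analytic ($\bm{\Sigma_1^1}$) determinacy. Since the determinacy of a payoff set is equivalent to the determinacy of its complement (one merely interchanges the roles of the two players), analytic determinacy and $\bm{\Pi_1^1}$-determinacy coincide as statements in ZFC. Hence, from a measurable cardinal we obtain the hypothesis of the preceding proposition.

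Concretely, the steps I would carry out are: first, invoke Martin's theorem (which can be cited from \cite{Kan}) to get analytic determinacy from the measurable cardinal; second, observe the trivial equivalence between analytic and co-analytic determinacy described above to obtain $\bm{\Pi_1^1}$-Determinacy; third, apply the preceding proposition verbatim to conclude that every co-analytic Menger set of reals is \scpt. There is essentially no obstacle beyond locating and correctly citing Martin's theorem, as all the real descriptive-set-theoretic content has already been packaged into the preceding proposition. The corollary is therefore a short citation-plus-invocation argument rather than one requiring genuine new work.
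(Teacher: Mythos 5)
Your proposal is correct and is exactly the intended (implicit) proof: the corollary follows from Proposition 2.3 via Martin's theorem that a measurable cardinal yields analytic determinacy, together with the standard equivalence of $\bm{\Sigma_1^1}$- and $\bm{\Pi_1^1}$-determinacy. The only nitpick is that the duality between a pointclass and its dual is obtained not by literally ``interchanging the roles of the two players'' (which also changes who moves first) but by the usual dummy-first-move argument, using that both classes are closed under continuous preimages; this is folklore and does not affect the argument.
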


Problem 2 is solved from the appropriate determinacy assumptions in \cite[28.20]{Kec95}.
Medini and Zdomskyy in 2015 proved 

\begin{cthrm}[\cite{MZ}]
\VL{} implies there is an analytic, completely Baire set of reals which is not Polish.
\end{cthrm}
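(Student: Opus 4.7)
The plan rests on two classical facts. First, a separable metrizable space that contains a closed subspace homeomorphic to $\Q$ cannot be completely Baire, since $\Q$ itself is not Baire. Second, by the Alexandrov--Mazurkiewicz theorem, a subspace of $\R$ is Polish iff it is $G_\delta$, so any analytic non-Borel set of reals automatically fails to be Polish. The task therefore reduces to constructing, under \VL, an analytic non-Borel set $X \subseteq \R$ containing no closed copy of $\Q$.

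I would realize $X$ as the projection $p[T]$ of a tree $T$ on $\omega \times \omega$ built recursively of length $\omega_1$ using the $\Sigma^1_2$-good well-ordering $<_L$ of $\R$ afforded by \VL. Two aims run in parallel. First, a fixed \emph{spine} is reserved inside $T$ whose projection covers a chosen $\Sigma^1_1$-complete set (for instance the codes of ill-founded trees on $\omega$), guaranteeing non-Borelness of $p[T]$. Second, the remaining branches of $T$ are devoted to diagonalization: enumerate in a $\Sigma^1_2$ way all candidate triples $(Q_\alpha,x_\alpha,y_\alpha)$, where $Q_\alpha \subseteq \R$ is countable and dense-in-itself, $x_\alpha \in \bar{Q_\alpha}\setminus Q_\alpha$, and $y_\alpha \in Q_\alpha$. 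At stage $\alpha$ pick the $<_L$-least witness and either arrange $x_\alpha \in p[T]$, so that $Q_\alpha$ is not closed in $X$, or arrange $y_\alpha \notin p[T]$, so that $Q_\alpha \not\subseteq X$, encoding the commitment into the $\alpha$-th non-spine branch of $T$.

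The main difficulty, and where \VL{} does essential work, is keeping everything analytic while executing $\aleph_1$ many modifications: a naive Luzin-style construction would produce a set of size $\aleph_1$ that is not analytic. The point is that all choices made during the recursion are $\Sigma^1_2$-definable from $<_L$, so the resulting tree $T$ lies in $L$, and its projection $p[T]$ is then analytic by fiat. The delicate bookkeeping is to guarantee that the non-spine branches of $T$ do not inadvertently accumulate so as to reintroduce a closed copy of $\Q$ in $X$, and that the spine is genuinely preserved; this typically requires a careful geometric choice of $x_\alpha$ and $y_\alpha$ inside some auxiliary nowhere-dense structure attached to $Q_\alpha$. Once these invariants are maintained, non-Borelness of $X$ is inherited from the spine, absence of a closed copy of $\Q$ follows by construction, and hence $X$ is analytic and completely Baire but not Polish.
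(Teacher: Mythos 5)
This statement is quoted from \cite{MZ}; the paper gives no proof of its own, but its surrounding machinery yields one in a few lines: take the Miller--Fremlin set of Proposition 2.1, a \coa{} Menger non-\scpt{} $X\subseteq\R$ under \VL. Its complement is analytic, is completely Baire by Theorem 3.1, and is not Polish, since if $\R\setminus X$ were \gdelta{} then $X$ would be $F_\sigma$, hence a countable union of closed-in-$\R$ (so analytic) Menger sets, each \scpt{} by Hurewicz, contradicting non-$\sigma$-compactness. Your reduction --- analytic, non-Borel, no relatively closed copy of $\Q$ --- is sound, though note that you quote only the trivial direction of Hurewicz's characterization (a closed copy of $\Q$ prevents completely Baire) while your plan needs the nontrivial converse, the Proposition quoted in Section III.

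The construction itself has two genuine gaps, both located exactly where the theorem is hard. First, ``the resulting tree $T$ lies in $L$, and its projection $p[T]$ is then analytic by fiat'' is a non sequitur: $p[T]$ is analytic for \emph{every} tree $T$ on $\omega\times\omega$ (use $T$ as a real parameter), so \VL{} contributes nothing to analyticity; the real issue is whether your intended set can be realized as such a projection at all. Adding $\aleph_1$ branches $(x_\alpha,z_\alpha)$ forces $[T]$ to contain all their limit points, so $p[T]$ acquires an uncontrolled analytic set of extra elements, and each commitment ``$y_\alpha\notin p[T]$'' must survive every later addition; the sketch supplies no mechanism for either. In the known constructions the $\Sigma^1_2$ wellordering is spent on Miller-style coding to build a \emph{thin co-analytic} set whose complement is the analytic example, not on certifying analyticity of a directly built projection. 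Second, the spine conflict: a $\Sigma^1_1$-complete set such as the ill-founded trees is not \gdelta{}, and its standard realizations contain relatively closed copies of $\Q$; killing these forces you to add points of $\bar{Q}\setminus Q$ lying \emph{outside} the spine, after which ``non-Borelness is inherited from the spine'' no longer follows --- a set containing a non-Borel subset need not be non-Borel, so you would need to control $X$ exactly on some compact set, which is precisely the control the first gap shows you do not have. You flag both issues as ``delicate bookkeeping,'' but they are the content of the theorem, and as written the proposal does not establish it.
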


The hypotheses concerning Problem 1 were considerably sharpened in \cite{TTT}:

\begin{cthrm}[\cite{TTT}] $\omega_1^{L[a]} < \omega_1$, for all reals $a$, if and only if every co-analytic Menger set of reals is \scpt.
\end{cthrm}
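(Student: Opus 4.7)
The plan is to derive both directions from a Hurewicz-type dichotomy for \coa{} sets: under the hypothesis $\omega_1^{L[a]} < \omega_1$ for all reals $a$, every \coa{} $C \subseteq \R$ is either \scpt{} or contains a closed copy of the Baire space $\omega^\omega$. Since $\omega^\omega$ is not Menger, this dichotomy gives the ``only if'' direction immediately. For the ``if'' direction I would, given a real $a$ with $\omega_1^{L[a]} = \omega_1$, relativize the Miller--Fremlin construction \cite{MF} to $a$ and produce a \coa{} Menger set that fails to be \scpt.

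For the dichotomy, let $C$ be $\bm{\Pi}_1^1(a)$ and not \scpt. Fix a recursive tree $T$ representing $C$ as $\set{x \st T_x \text{ is well-founded}}$ and write $C = \bigcup_{\alpha < \omega_1} C_\alpha$ in the Sierpi\'nski decomposition by rank, with each $C_\alpha$ Borel, hence analytic. I would adapt the fusion argument from Hurewicz's analytic dichotomy to $T$: along a Cantor scheme indexed by $\omega^{<\omega}$ I would choose basic open neighborhoods around reals of $C$ whose ranks are prescribed to grow along the scheme, using non-\scpt-ness to keep branching possible at every node. The hypothesis $\omega_1^{L[a]} < \omega_1$ is used to guarantee, at each stage, the existence of reals in $C$ of sufficiently high rank lying outside $L[a]$ (a Mansfield--Solovay-type consequence of the hypothesis applied to the relevant Borel pieces). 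The branches of the resulting scheme then form a closed copy of $\omega^\omega$ inside $C$.

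For the converse direction, take a real $a$ with $\omega_1^{L[a]} = \omega_1$ and carry out the Miller--Fremlin construction inside $L[a]$ using its canonical $\bm{\Sigma}_2^1(a)$ wellorder of the reals. A transfinite recursion of length $\omega_1 = \omega_1^{L[a]}$ simultaneously diagonalizes against the compact subsets of $\R$ listed by the wellorder and schedules a Menger response to each sequence of open covers, yielding a set $X$ of size $\omega_1$ that is Menger and not \scpt. The definability of the wellorder, together with the absoluteness provided by $\omega_1^{L[a]} = \omega_1^V$, keeps $X$ at the $\bm{\Pi}_1^1(a)$ level, i.e.\ \coa.

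The main obstacle is the fusion step of the dichotomy. In the analytic setting, Hurewicz's fusion runs on a tree that literally parameterizes the set; for \coa{} $C$ the tree $T$ parameterizes the \emph{complement}, so one cannot read branches of $\omega^\omega \hookrightarrow C$ off $T$ directly. Instead one must thread a Cantor scheme through the Borel approximants $C_\alpha$ while exploiting the supply of high-rank reals in $V \setminus L[a]$ guaranteed by the hypothesis. Coordinating these three ingredients --- non-\scpt-ness of $C$, boundedness of rank on the $C_\alpha$, and $L[a]$-absoluteness --- so that the scheme closes up to a genuine copy of $\omega^\omega$ inside $C$ is where I expect the real work to lie.
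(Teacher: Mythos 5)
The paper itself does not prove Theorem 2.7; it quotes it from \cite{TTT}, and the architecture it exhibits (Theorem 4.1 together with Proposition 4.2 and Feng's results in \cite{F}) shows the intended route: $\forall a\,(\w_1^{L[a]}<\w_1)$ is first shown equivalent to {\bf OGA*(\coa)}, and {\bf OGA*}$(\Gamma)$ is then shown to imply that every non-\scpt{} $A\in\Gamma$ meets some compact $K$ in a closed copy of the irrationals, which kills Menger. Your forward direction asserts, in effect, exactly that dichotomy for $\Pi^1_1$ sets --- which is a true statement --- but the proof you propose for it (a direct fusion through the Sierpi\'nski constituents of a \coa{} set, threading high-rank reals outside $L[a]$) is precisely the step you admit you cannot carry out, and it is not how the result is obtained in the literature; the known argument derives the dichotomy from {\bf OGA*} applied to a suitable open partition rather than from a rank-fusion on the tree for the complement. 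As it stands, the forward direction is a restatement of what is to be proved, not a proof.

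The converse direction has two concrete gaps. First, a transfinite recursion of length $\w_1$ along the $\Sigma^1_2(a)$ wellorder of the reals of $L[a]$ yields, on its face, a $\Sigma^1_2(a)$ set; getting down to $\Pi^1_1(a)$ is not a formal consequence of ``definability of the wellorder plus absoluteness'' but requires A.~Miller's coding technique (e.g.\ working inside the largest thin $\Pi^1_1(a)$ set), which is the actual content of the Miller--Fremlin construction and must be invoked or reproved. Second, $\w_1^{L[a]}=\w_1$ is consistent with $2^{\aleph_0}>\aleph_1$, in which case an $\w_1$-length recursion cannot ``schedule a Menger response'' to each of the $2^{\aleph_0}$ many sequences of open covers; nor is a set that is Menger in $L[a]$ automatically Menger in $V$, where new covers appear. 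One must instead secure the Menger property of the counterexample through an absolute combinatorial feature (for instance, that all of its continuous images in $\w^\w$ are non-dominating), and arranging that simultaneously with uncountability, absence of perfect subsets (hence failure of \scpt ness), and co-analyticity is the real work of \cite{TTT}, which the sketch does not address.
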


\begin{ccor}[\cite{TTT}]
The assertion that every co-analytic Menger set of reals is \scpt{} is equiconsistent with the existence of an inaccessible cardinal.
\end{ccor}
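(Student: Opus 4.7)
The plan is to deduce the corollary from the preceding theorem together with the classical equiconsistency between the existence of an inaccessible cardinal and the statement that $\omega_1$ is ``inaccessible to the reals'', i.e., $\omega_1^{L[a]} < \omega_1$ for every real $a$. Since the preceding theorem already provides the equivalence between ``every co-analytic Menger set of reals is \scpt'' and this inaccessibility condition, the corollary reduces to establishing the two-sided consistency transfer.

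For the direction from an inaccessible, I would start with a model $V \models$ ZFC $+$ ``$\kappa$ is inaccessible'' and L\'evy-collapse $\kappa$ to $\omega_1$ by forcing with $\mathrm{Coll}(\omega,{<}\kappa)$, producing $V[G]$ with $\omega_1^{V[G]}=\kappa$. Using the standard factorization of the L\'evy collapse, every real $a \in V[G]$ lies in some intermediate extension $V[G\restr\alpha]$ with $\alpha<\kappa$. Because $L[a]\subseteq V[G\restr\alpha]$, whose $V$-cardinality is strictly less than $\kappa$, one obtains $\omega_1^{L[a]}<\kappa=\omega_1^{V[G]}$, as required.

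For the direction recovering an inaccessible, assume ZFC $+$ ``$\omega_1^{L[a]}<\omega_1$ for every real $a$'', and show that $\omega_1$ (as computed in $V$) is inaccessible in $L$, yielding $L \models$ ZFC $+$ ``there is an inaccessible cardinal'' and thus the desired consistency. The three checks are: (i) $\omega_1$ is an $L$-cardinal, since any $L$-bijection from some $\beta<\omega_1$ onto $\omega_1$ would also witness countability of $\omega_1$ in $V$; (ii) $\omega_1$ is regular in $L$, since an $L$-cofinal map of length $<\omega_1$ into $\omega_1$ descends to $V$; and (iii) $\omega_1$ is a strong limit in $L$: for each $L$-cardinal $\lambda<\omega_1$, pick a real $a$ coding a bijection $\omega\to\lambda$; then $\lambda$ is countable in $L[a]$, so $(\lambda^+)^{L[a]}=\omega_1^{L[a]}$. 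Since every $L[a]$-cardinal is an $L$-cardinal, $(\lambda^+)^L \le (\lambda^+)^{L[a]} = \omega_1^{L[a]} < \omega_1$ by hypothesis, and GCH in $L$ then gives $(2^\lambda)^L=(\lambda^+)^L<\omega_1$.

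The main subtlety is step (iii): the carefully chosen real $a$ witnessing countability of $\lambda$, combined with GCH in $L$, is precisely what converts the hypothesis into the strong-limit property. Everything else is routine once the translation via the preceding theorem is in hand; in particular, the forward direction is the standard L\'evy collapse computation, and clauses (i) and (ii) of the reverse direction use only elementary absoluteness between $L$ and $V$.
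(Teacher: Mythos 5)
Your proposal is correct and follows exactly the route the paper relies on: the corollary is quoted from \cite{TTT} and is obtained there by combining the preceding equivalence (Theorem 2.6) with the classical Solovay--L\'evy equiconsistency of ``$\omega_1^{L[a]}<\omega_1$ for all reals $a$'' with an inaccessible cardinal, which is precisely your L\'evy-collapse computation in one direction and your verification that $\omega_1$ is inaccessible in $L$ (using GCH in $L$ for the strong-limit clause) in the other. The paper itself recalls both ingredients in the proof of Theorem 4.2, so there is nothing further to add.
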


We shall prove analogous results with respect to Problem 2:

\begin{thrm}
$\w_1^{L[a]} < \w_1$, for all reals $a$, if and only if every analytic, completely Baire set of reals is Polish.
\end{thrm}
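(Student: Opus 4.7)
My plan is to establish both directions by adapting the strategy of \cite{TTT} from the Menger/$\sigma$-compact setting to the completely Baire/Polish setting, using the fact that an analytic set of reals is Polish if and only if it is Borel.

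For the backward direction, I would argue contrapositively: assume $\w_1^{L[a]} = \w_1$ for some real $a$, and run the construction of \cite{MZ} relativized to the inner model $L[a]$. The \cite{MZ} argument uses the $\Sigma^1_2$ good well-ordering of the reals of $L$ to build, in $\w_1$ stages, a tree $T$ on $\w \times \w$ whose projection is completely Baire yet not Polish. Relativizing replaces $L$ by $L[a]$ and produces a tree $T \in L[a]$. Setting $A = \pi[T]$ as computed in $V$, the set $A$ is $\Sigma^1_1(a)$, hence analytic. The remaining task is to check that ``completely Baire'' and ``non-Polish'' transfer from $L[a]$ to $V$; both follow from standard $\Sigma^1_2$ absoluteness combined with the robustness of the \cite{MZ} diagonalization against Cohen-like generic extensions, which the construction is designed to accommodate.

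For the forward direction, assume $\w_1^{L[a]} < \w_1$ for every real $a$, and let $A$ be analytic and completely Baire with $\Sigma^1_1$-code $a$, so that $A = \pi[T]$ for a tree $T \in L[a]$. Suppose for contradiction that $A$ is not Polish, hence not Borel. The crux is an effective Hurewicz-type dichotomy: under the hypothesis, every $\Sigma^1_1(a)$ non-Borel subset of $\R$ contains a closed copy of $\mathbb{Q}$. Granted this, $A$ would contain a closed subspace that is not Baire, contradicting complete Baireness. To establish the dichotomy, I would analyze the canonical Borel-stratification of $A$ via $T$; non-Borelness forces this stratification to reach length $\geq \w_1^{L[a]}$ inside $L[a]$, while the hypothesis provides plenty of reals in $V \setminus L[a]$ acting as ``generic branches'' of $T$ that evade every Borel approximation. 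A fusion/diagonalization over such branches then produces a countable dense-in-itself subset of $A$ that is closed in $A$.

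The main obstacle is the dichotomy in the forward direction. It is the natural dual of the $\omega^\omega$-dichotomy for co-analytic non-$\sigma$-compact sets used in \cite{TTT}, but building a closed $\mathbb{Q}$ is more delicate than building a closed $\omega^\omega$, since the target is countable, must be dense-in-itself, and must be closed in $A$ rather than in an ambient Polish overspace. I expect the argument to hinge on a Mansfield--Solovay-style absoluteness, which converts failure of the dichotomy in $V$ into an $\w_1$-long recursion carried out inside $L[a]$, in direct conflict with $\w_1^{L[a]} < \w_1$.
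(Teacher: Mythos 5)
Your forward direction rests on a false fact: it is not true that an analytic set of reals is Polish if and only if it is Borel. The space $\Q$ is Borel (indeed countable and $F_\sigma$) but not Polish; the correct characterization is that a subspace of $\R$ is Polish iff it is \gdelta{} in $\R$. Consequently the step ``$A$ is not Polish, hence not Borel'' is a non sequitur, and the dichotomy you formulate is about the wrong class: you need it for analytic non-\gdelta{} sets, not analytic non-Borel sets. (For Borel, or even \coa, sets the dichotomy is Hurewicz's ZFC theorem --- Proposition 2.2 --- so no set-theoretic hypothesis is needed there; the whole difficulty is the properly analytic case.) More seriously, that dichotomy is exactly where the content of the theorem lies, and you do not prove it: the ``Borel stratification of length $\geq \w_1^{L[a]}$'' and the ``Mansfield--Solovay-style absoluteness'' are named but not carried out, and it is unclear how a fusion over generic branches would yield a countable dense-in-itself set that is \emph{closed in $A$}. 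The backward direction is right in spirit, but the transfer of ``completely Baire'' and ``non-Polish'' from $L[a]$ to $V$ is asserted rather than argued; it is cleaner to relativize the Miller--Fremlin \coa{} Menger non-\scpt{} set to $L[a]$ and apply Theorem 3.1 to its complement (a non-\scpt{} subset of $\R$ is not $F_\sigma$, so its complement is not \gdelta, hence not Polish, while Theorem 3.1 makes it completely Baire).

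For comparison, the paper does not attack the equivalence directly at all. Both directions are routed through the Open Graph Axiom: Feng's theorem gives that $\w_1^{L[a]} < \w_1$ for all reals $a$ is equivalent to {\bf OGA*(\coa)}; the latter yields (Proposition 4.2) a compact $K$ with $K \cap B$ homeomorphic to $\Q$ whenever $\R\setminus B$ is \coa{} and not \scpt, which destroys complete Baireness of any non-Polish analytic $B$; the converse uses Theorem 3.1 together with the equivalence of {\bf OGA*(\coa)} with ``\coa{} Menger sets are \scpt'' from \cite{TTT}. The missing dichotomy in your forward direction is precisely what this chain of citations supplies, so as written your argument has a genuine gap at its central step.
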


\begin{cor}
The assertion that every analytic completely Baire set of reals is Polish is equiconsistent with the existence of an inaccessible cardinal.
\end{cor}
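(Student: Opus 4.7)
I would follow the template of \cite{TTT} and \cite{MZ}, proving the two directions of the biconditional separately. For the ``$\Leftarrow$'' direction (the conclusion implies the set-theoretic hypothesis) I would argue the contrapositive: given a real $a$ with $\w_1^{L[a]} = \w_1$, produce an analytic subset of $\R$ that is completely Baire but not Polish. The construction in \cite{MZ} uses \VL{} only to obtain a sufficiently definable well-ordering of $\R$ of length $\w_1$; relativizing to $L[a]$ substitutes the canonical well-ordering of $\R \cap L[a]$, which by hypothesis still has length $\w_1$ in $V$. The inductive bookkeeping of \cite{MZ}---killing each potential closed copy of $\Q$ inside the set while preventing the set from being \gdelta---should carry through verbatim relative to $a$, yielding an analytic-in-$a$ completely Baire subset of $\R$ that is not Polish.

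For the ``$\Rightarrow$'' direction, assume $\w_1^{L[a]} < \w_1$ for every real $a$, and let $A \subseteq \R$ be analytic and completely Baire, say $A = p[T]$ for a tree $T$ on $\w \times \w$ coded by a real $a$. I would establish an analytic Hurewicz-type dichotomy under this hypothesis: either $A$ is \gdelta{} in $\R$ (hence Polish), or $A$ contains a closed copy of $\Q$. Complete Baire-ness of $A$ excludes the second possibility, so the first must hold. The dichotomy would be proved in the spirit of \cite{TTT}: if $A$ is not \gdelta, a canonical tree analysis carried out inside $L[a]$ produces an obstruction (a ranking of witnesses to failure of the $G_\delta$-property) of length at most $\w_1^{L[a]}$; since $\w_1^{L[a]}$ is countable in $V$, this obstruction survives as a countable object in $V$, from which a closed copy of $\Q$ inside $A$ can be read off.

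The main obstacle will be the forward direction, specifically designing the analytic Hurewicz-type dichotomy so that the $L[a]$-to-$V$ absoluteness step goes through cleanly. In \cite{TTT} one uses the $\w_1$-Suslin representation of co-analytic sets together with ill-founded subtrees to exhibit closed copies of $\w^\w$ inside a co-analytic Menger non-\scpt{} set; the analytic analogue must instead detect closed copies of $\Q$ inside an analytic set that fails to be \gdelta, and the subtlety is in arranging this detection to be sufficiently absolute between $L[a]$ and $V$. The abstract's claim that the two problems are ``essentially equivalent'' suggests that a ZFC-level operator $A \mapsto B$ sending an analytic completely-Baire-but-not-Polish set to a co-analytic Menger-but-not-\scpt{} set may be available, providing a direct reduction to \cite{TTT} as an alternative to reproving the forward direction from scratch.
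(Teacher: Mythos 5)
Your overall architecture---prove the biconditional with ``$\w_1^{L[a]} < \w_1$ for all reals $a$'' (i.e.\ Theorem 2.8) and then pass to the classical equiconsistency of that statement with an inaccessible---is the same as the paper's, but both of the directions you sketch replace short reductions to the literature with substantial constructions you do not carry out, and the fallback you propose for the hard direction would not work. For the counterexample direction there is no need to relativize the delicate construction of \cite{MZ} to $L[a]$: if $\w_1^{L[a]} = \w_1$, then \cite{TTT} already provides a \coa{} Menger set $X$ that is not \scpt, and Theorem 3.1 (complements of Menger sets of reals are completely Baire) shows at once that $\R \setminus X$ is analytic, completely Baire, and not Polish---were it \gdelta{} in $\R$, $X$ would be $F_\sigma$ in $\R$ and hence \scpt. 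That complementation bridge is the paper's key observation and is what makes the two problems ``essentially equivalent''; your proposal never isolates it.

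The genuine gap is in your forward direction. The dichotomy you propose to prove from $\w_1^{L[a]} < \w_1$---every analytic set is either \gdelta{} or contains a closed copy of $\Q$---\emph{is} the content of the theorem, and you give no argument for the crucial step that a countable-in-$V$ ranking of obstructions yields a closed copy of $\Q$; carrying this out from scratch would amount to reproving Feng's theorem. The paper instead quotes \cite{F} ($\forall a\,(\w_1^{L[a]}<\w_1)$ implies \textbf{OGA*(\coa)}) together with Proposition 4.2 from \cite{TTT}: under \textbf{OGA*(\coa)}, if $B$ is analytic and not Polish then $\R\setminus B$ is \coa{} and not \scpt, so some compact $K$ has $K\cap B$ homeomorphic to $\Q$ and closed in $B$, contradicting complete Baireness. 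Moreover, the ``ZFC-level operator'' you hope for as an alternative---sending an analytic completely-Baire-non-Polish set to a \coa{} Menger non-\scpt{} set---cannot simply be complementation: the paper's closing remark on Bernstein sets shows that complements of completely Baire sets need not be Menger, and the paper obtains that implication only indirectly through the set-theoretic characterization, never as a direct reduction. Finally, you should still record the (routine) passage from Theorem 2.8 to the corollary itself: one direction collapses an inaccessible to get $\forall a\,(\w_1^{L[a]}<\w_1)$, the other notes that this hypothesis makes $\w_1$ inaccessible in $L$.
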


\section{The Menger property and the completely Baire property}

There is a surprising connection between {\bf Problems 1 and 2}:

\begin{thrm}
Suppose $X$ is a Menger set of reals. Then $\R \setminus X$ is completely Baire. 
\end{thrm}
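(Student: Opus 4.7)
The plan is to argue by contrapositive: assume $\R \setminus X$ is not completely Baire and deduce that $X$ is not Menger.  The key external ingredient I would rely on is the classical characterization, valid for separable metrizable spaces, that a space is completely Baire if and only if it contains no closed subspace homeomorphic to $\Q$.  Granting this, one fixes a subspace $Q \subseteq \R \setminus X$ that is closed in $\R \setminus X$ and homeomorphic to $\Q$.

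Next I would set $P := \bar{Q}^{\R}$, the closure of $Q$ in $\R$.  This is Polish as a closed subset of $\R$, and $Q$ is dense in $P$ by construction.  Because $Q$ is closed in $\R \setminus X$, one has $Q = \bar{Q}^{\R} \cap (\R \setminus X) = P \setminus X$, so $K := P \setminus Q = P \cap X$.  As the intersection of $X$ with a set closed in $\R$, the set $K$ is closed in $X$ and therefore inherits the Menger property.  Since $Q$ is countable (hence $F_\sigma$ in $P$), $K = P \setminus Q$ is $G_\delta$ in $P$ and, by Alexandrov's theorem, Polish in its own right.

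For the endgame I would invoke the Hurewicz proposition recalled in the introduction: being analytic (in fact Polish) and Menger, $K$ is \scpt, say $K = \bigcup_n K_n$ with each $K_n$ compact.  Each $K_n$ is closed in $P$ and has empty interior there, because $Q$ is dense in $P$ and disjoint from $K_n$; moreover $P$ has no isolated points, since any such point would lie in the dense set $Q$ and be isolated in $Q$, contradicting $Q \cong \Q$.  Consequently the countable set $Q$ is itself meager in $P$.  Hence $P = Q \cup \bigcup_n K_n$ exhibits $P$ as a countable union of nowhere dense sets, contradicting the Baire Category Theorem for the Polish space $P$.

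The main obstacle I anticipate is pinning down and citing the right form of the characterization of completely Baire spaces by the absence of closed copies of $\Q$ — this is a folklore theorem for separable metric spaces, but one should make sure to invoke it correctly.  Once available, the rest is a short and direct combination of Hurewicz's \scpt{}ness theorem with the Baire Category Theorem.
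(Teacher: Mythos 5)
Your proof is correct. It opens exactly as the paper does --- via Hurewicz's characterization of completely Baire metrizable spaces as those containing no closed copy of $\Q$, producing a closed copy $Q$ of $\Q$ in $\R \setminus X$ and observing that $\bar{Q}\setminus Q = \bar{Q}\cap X$ is closed in $X$, hence Menger, and Polish --- but your endgame is genuinely different. The paper identifies $\bar{Q}\setminus Q$ with the space $\Qc$ of irrationals (it is Polish, zero-dimensional, and nowhere locally compact, so the Alexandrov--Urysohn characterization applies) and then quotes the fact that $\Qc$ is not Menger. You instead apply Hurewicz's theorem that analytic Menger sets of reals are \scpt{} to write $\bar{Q}\cap X = \bigcup_n K_n$ with each $K_n$ compact and nowhere dense in $\bar{Q}$, and combine this with the meagerness of the countable crowded set $Q$ to contradict the Baire Category Theorem in the nonempty Polish space $\bar{Q}$. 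Your route spares you the verification that $\bar{Q}\setminus Q$ is nowhere locally compact and the appeal to Alexandrov--Urysohn, at the price of invoking the full analytic Hurewicz dichotomy (already quoted in the paper's introduction) where the paper needs only the more elementary fact that $\Qc$ is not Menger; the intermediate checks in your write-up (that $K$ is closed in $X$, Borel in $\R$, Polish via Alexandrov's theorem, and that $\bar{Q}$ has no isolated points) are all accurate.
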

\begin{proof}
We need: 
\begin{cpropn}[\cite{H2} (For a proof in English, see \cite{vM})]
A metrizable space is completely Baire if and only if it does not include a closed copy of the space $\Q$ of rationals.
\end{cpropn}

Now suppose $\R \setminus X$ is not completely Baire. Then there is a copy $Q$ of $\Q$ closed in $\R \setminus X$. 
%
%
%
%
Then $\bar{Q}\setminus Q$ is Polish, nowhere locally compact and zero-dimensional (since it does not include any interval). But then it is homeomorphic to the space $\Qc$ of irrationals, which is not Menger, despite being a closed subspace of $X$. This is a contradiction.
Thus $\R \setminus X$ is completely Baire, which was to be proved.
\end{proof}

\section{An Open Graph Axiom}

In \cite{To}, 
Todorcevic introduced what he called the {\bf Open Coloring Axiom}. 
Unfortunately that name had earlier been used by Abraham--Rubin--Shelah \cite{ARS}.
This has caused some confusion, so Todorcevic has renamed his axiom the {\bf Open Graph Axiom}. 
A variation of this axiom was introduced in \cite{F}. 
Slightly modifying Feng's notation, 
we have: 

\begin{defn}
Let $\Gamma$ be a collection of subsets of $\R$.
OGA*($\Gamma$): 
Let $X$ be any member of $\Gamma$. 
Let $[X]^2 = K_1 \cup K_2$ be a partition with $K_1$ open in the topology on $[X]^2$ inherited from $X^2$. 
Either there is a perfect $A \subseteq X$ with $[A]^2 \subseteq K_1$ or $X = \bigcup_{n < \w} A_n$ with $[A_n]^2 \subseteq K_2$ for all $n < \w$.
\end{defn}

Replacing ``perfect'' with ``uncountable'' one gets the {\bf Open Graph Axiom}. 
The {\bf Open Graph Axiom} implies the continuum hypothesis fails \cite{To}, but {\bf OGA*(projective)} holds in the model obtained by collapsing an inaccessible to $\w_1$ by finite conditions \cite{F}.
CH can be arranged to hold in such a model, so {\bf OGA*(projective)} surprisingly does not imply {\bf OGA}.

\begin{thrm}
The following are equivalent. 
\begin{enumerate}[label=\alph*)]
\item {\bf OGA*(\coa)};
\item \coa{} Menger subsets of $\R$ are \scpt;
\item analytic completely Baire subsets of $\R$ are Polish;
\item for every $a \in \R$, $\w_1^{L[a]} < \w_1$;
\item every uncountable \coa{} set includes a perfect set.
\end{enumerate}
\end{thrm}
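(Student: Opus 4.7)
The plan is to argue by the cycle (c) $\Rightarrow$ (b) $\Rightarrow$ (d) $\Rightarrow$ (c), together with the separate equivalences (d) $\Leftrightarrow$ (e) $\Leftrightarrow$ (a). Two of the links are already available from the literature: (b) $\Leftrightarrow$ (d) is the cited Tall--Todorcevic--Tokg\"oz theorem, and (d) $\Leftrightarrow$ (e) is the classical Mansfield--Solovay perfect-set dichotomy for \coa{} sets.

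The implication (c) $\Rightarrow$ (b) is an immediate consequence of Theorem 3. Suppose $X$ is \coa{} and Menger; then $\R \setminus X$ is analytic and, by Theorem 3, completely Baire. Hypothesis (c) promotes it to Polish, hence $G_\delta$ in $\R$, so $X$ is $F_\sigma$ in $\R$ and therefore \scpt. For (a) $\Leftrightarrow$ (e), one direction is essentially free: specializing OGA*(\coa) to the trivial partition $K_1 = [X]^2$, $K_2 = \emptyset$ forces every uncountable \coa{} set to contain a perfect subset, giving (a) $\Rightarrow$ (e). The reverse direction follows Feng's template from \cite{F}: given an open partition on a \coa{} set $X$ with no countable $K_2$-decomposition, the \coa{} perfect-set property feeds a tree-fusion producing a perfect $K_1$-clique.

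The main obstacle is (d) $\Rightarrow$ (c): under $\w_1^{L[a]} < \w_1$ for every real $a$, every analytic completely Baire set of reals must be Polish. I would prove under (d) a Hurewicz-style dichotomy for analytic sets, namely that every analytic set is either Polish or contains a closed copy of $\Q$; complete Baireness then rules out the second horn, giving (c). The strategy: given analytic $X = p[T]$ with $T$ recursive in a real $a$, assume $X$ is not $G_\delta$ in $\R$ and, using absoluteness and $T$, extract an uncountable \coa-in-$a$ set $B \subseteq \bar X$ of points witnessing the failure of local Polishness. Mansfield--Solovay relative to $a$, available by (d), supplies a perfect $P \subseteq B$, along which a fusion argument over $T$, modelled on Hurewicz's original \coa{} proof, plants a closed copy of $\Q$ inside $X$.

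The technical crux is arranging the fusion so that the embedded copy of $\Q$ is closed in $X$ rather than merely closed in $\bar X$: its closure in $\R$ must meet $X$ only in the \emph{rational} points, with the \emph{irrational} limits all landing in $B \subseteq \R \setminus X$. This is precisely the obstruction that the \VL{} coding of \cite{MZ} exploits to build an analytic completely Baire non-Polish set; the \coa{} perfect-set property provided by (d) is exactly what blocks that coding and allows the fusion to terminate with the desired closed copy of $\Q$.
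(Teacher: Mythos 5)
Your overall architecture is sound, and most of the links coincide with the paper's: c) $\Rightarrow$ b) via Theorem 3.1 exactly as the paper does it (complement is analytic and completely Baire, hence Polish, hence \gdelta, so the original set is $F_\sigma$ in $\R$ and therefore \scpt); a) $\Leftrightarrow$ b) and b) $\Leftrightarrow$ d) are correctly delegated to \cite{TTT}; d) $\Leftrightarrow$ e) $\Leftrightarrow$ a) to Mansfield--Solovay and Feng \cite{F}; and the trivial-partition derivation of the perfect set property from {\bf OGA*(\coa)} is fine.

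The genuine gap is your implication d) $\Rightarrow$ c), which is the one piece of new content in this theorem, and your proposal does not actually prove it. You say you ``would prove'' under d) that every analytic set is either Polish or contains a closed copy of $\Q$, via ``a fusion argument over $T$, modelled on Hurewicz's original \coa{} proof.'' But the \coa{} Hurewicz argument does not transfer to analytic sets --- that is precisely why the Medini--Zdomskyy \VL{} example exists --- and you yourself name the ``technical crux'' (forcing all irrational limit points of the embedded copy of $\Q$ to land outside $X$) without supplying any mechanism for it. Neither the definition of your auxiliary \coa{} set $B$ of ``points witnessing the failure of local Polishness'' nor how a perfect subset of $B$ steers the fusion is given; as written this is a restatement of the difficulty, not an argument. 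The paper closes the cycle differently and much more cheaply: it derives c) from a) by quoting the proposition from \cite{TTT} that OGA*($\Gamma$), for $\Gamma$ closed under continuous preimages, produces for every non-\scpt{} $A \in \Gamma$ a compact $K$ with $K \cap A$ homeomorphic to $\Qc$ and $K \setminus A$ homeomorphic to $\Q$. Applied to the \coa{} complement of an analytic, completely Baire, non-Polish $B$ (that complement cannot be \scpt, else $B$ would be \gdelta{} and hence Polish), this hands you a copy of $\Q$ closed in $B$, contradicting complete Baireness. You should either prove your analytic Hurewicz dichotomy in full, or reroute through a) and an OGA*-type dichotomy as the paper does.
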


\begin{proof}
The equivalence of a) and b) is in \cite{TTT}; of a), d), and e) in \cite{F}. 
The implication from c) to b) follows from Theorem 3.1. 
To obtain c) from a), we recall that in \cite{TTT} it is shown that: 

\begin{propn}
If $\Gamma$ is closed under continuous pre-images, then 
OGA*($\Gamma$) implies that if $A \in \Gamma$ is not \scpt, then there is a compact $K \subseteq \R$ such that $K \cap A$ is homeomorphic to $\Qc$, the space of irrationals, while $K \cap (\R \setminus A)$ is homeomorphic to $\Q$.
\end{propn}

Now suppose $B$ is completely Baire, analytic, and not Polish. 
Then $\R \setminus B$ is co-analytic and not \scpt, so there is a compact $K$ with $K \cap B$ homeomorphic to $\Q$. 
But $K \cap B$ is closed in $B$, contradicting $B$ being completely Baire.
\end{proof}

The next result extends Theorems 2.7 and 4.1 to projective sets.

\begin{thrm}
The following are equiconsistent:
\begin{enumerate}[label=\alph*), leftmargin=4em]
\item {\bf OGA*(projective)};
\item projective Menger subsets of $\R$ are \scpt;
\item projective completely Baire subsets of $\R$ are Polish;
\item there is an inaccessible cardinal;
\item every uncountable projective set includes a perfect set.
\end{enumerate}
\end{thrm}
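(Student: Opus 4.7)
The plan is to establish that all five assertions are equiconsistent with the existence of an inaccessible cardinal, by arranging the argument as: a) implies b), c), and e) in ZFC; each of b), c), e) implies Con(d); and Con(d) implies Con(a) via Feng's model.

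The ZFC implications from a) are essentially the projective analogues of Theorem 4.1. Proposition 4.2 requires only that $\Gamma$ be closed under continuous pre-images, a property shared by the projective and coanalytic classes, so its conclusion applies verbatim with $\Gamma =$ projective. Applied to a hypothetical projective Menger $X$ that is not $\sigma$-compact, or to the complement of a projective completely Baire $B$ that is not Polish, it reproduces the contradictions of Theorem 4.1 and yields a) $\Rightarrow$ b) and a) $\Rightarrow$ c). For a) $\Rightarrow$ e) one applies OGA*($X$) to the trivial partition $K_1 = [X]^2$, $K_2 = \emptyset$: the second alternative would force $X$ to be countable, so any uncountable projective $X$ must contain a perfect subset.

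The decisive consistency-strength observation is that c) implies b) in ZFC. Indeed, if $X$ is projective and Menger, then Theorem 3.1 makes $\R \setminus X$ projective and completely Baire; by c) it is Polish, hence $G_\delta$ in $\R$, so $X$ is $F_\sigma$ in $\R$ and thus $\sigma$-compact. Therefore Con(c) $\Rightarrow$ Con(b). Now b), restricted to coanalytic sets, invokes Theorem 2.6 to yield $\w_1^{L[a]} < \w_1$ for every real $a$, making $\w_1$ inaccessible in $L$ and delivering Con(d). The same lower bound follows from e): restricted to coanalytic sets, it is precisely the hypothesis of the d) $\Leftrightarrow$ e) equivalence inside Theorem 4.1, again producing $\w_1^{L[a]} < \w_1$ for all reals $a$ and hence Con(d).

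Conversely, Con(d) $\Rightarrow$ Con(a) is the theorem of Feng cited just before Definition 4.1: in the model obtained by collapsing an inaccessible to $\w_1$ by finite conditions, OGA*(projective) holds, and then b), c), e) all hold in that model by the ZFC implications established above. The main obstacle is conceptual rather than technical: one must recognize that nearly every ingredient is already in the literature, the only new ZFC step being c) $\Rightarrow$ b), which follows from Theorem 3.1 together with the elementary fact that $F_\sigma$ subsets of $\R$ are $\sigma$-compact. The Feng forcing and the inner-model calibrations of Theorems 2.6 and 4.1 are then imported without modification.
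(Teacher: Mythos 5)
Your proposal is correct and follows essentially the same route as the paper: the ZFC implications a)\,$\Rightarrow$\,b), c) via Proposition 4.2, c)\,$\Rightarrow$\,b) via Theorem 3.1, the lower bound Con(d) obtained by restricting to the co-analytic case and working in $L$, and the upper bound via Feng's collapse of an inaccessible. Your write-up is in fact slightly more complete than the paper's, since you make explicit the implication a)\,$\Rightarrow$\,c) (needed to close the equiconsistency loop for c)) and give a direct trivial-partition argument for a)\,$\Rightarrow$\,e) where the paper simply cites Feng.
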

\begin{proof}
Recall that the complement of a projective set is projective
and that the continuous pre-image of a projective set is projective \cite{Kec95}. 
Also recall that if $\w_1^{L[a]} < \w_1$, then $\w_1$ is inaccessible in $L$, so it is consistent there is an inaccessible cardinal. 
The implication from a) to b) is in \cite{TTT} and is clear from 4.2. 
That c) implies b) is by 3.1. The consistency of b) implies the consistency of d) by 4.1.
Feng \cite{F} proved the equiconsistency of a) and d), 
and that a) is equivalent to e). 
\end{proof}

\section{An Application to Model Theory}

\begin{defn}
A {\it theory} is a set of sentences closed under logical consequence. A {\it type} is a collection of formulas. 
An {\it $n$-type} is a collection of formulas, each with exactly $n$ free variables.
An $n$-type is called {\it isolated} 
for a theory $T$ if there is a formula $\phi$ with exactly $n$ free variables 
such that in every model $\mc{M}$ of $T$, 
any $n$-tuple satisfying $\phi$ must satisfy all the elements of the type.
The {\it Omitting Types Theorem} ({\bf OTT}) (for first-order logic) asserts:

\smallskip
Let $\{\Sigma_k\}_{k<\w}$ be a countable collection of non-isolated types.
Then there is a model $\mc{M}$ of $T$
such that for each $k$ and each $\sigma \in \Sigma_k$, $\mc{M} \models \neg \sigma$.
\end{defn}

\noindent The {\bf OTT} follows from the {\bf Compactness Theorem} for first-order logic, but also holds for certain other logics for which compactness fails, and is a useful substitute for compactness.
The {\bf OTT} is often proved by a Baire category argument; the exact relationship between the {\bf OTT} and the {\bf Baire Category Theorem} is investigated in Eagle and Tall \cite{ET}.
Just as the existence of a winning strategy for Non-empty in the Banach-Mazur game ({\it weak $\alpha$-favorability}) is strictly stronger than just being a Baire space, one can formulate a game version of the {\bf OTT} and ask whether it too is strictly stronger. 
Eagle and Tall show how to define abstract logics from arbitrary topological spaces and prove: 

\begin{thrm}
There is an abstract logic satisfying the {\bf OTT} but not its game version if there exists a separable metrizable $Y$ such that $Y^\omega$ is completely Baire but does not include a dense completely metrizable subspace.
\end{thrm}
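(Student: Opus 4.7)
The plan is to invoke the Eagle--Tall construction that attaches to a separable metrizable space $X$ an abstract logic $L_X$ whose model-theoretic content is faithfully encoded by the topology of $X$ and its countable powers. Taking $L := L_Y$, I would verify the two required features, namely OTT and failure of its game version, separately.

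For OTT: in the Eagle--Tall setup, a theory $T$ decodes to a closed subspace $C_T$ of the ``model space'', which is naturally a closed subset of $Y^\w$; the $\w$\tss{th} power arises because formulas range over finite tuples of variables and a theory is a countable conjunction. A non-isolated type $\Sigma_k$ translates to an $F_\s$ subset of $C_T$ with dense complement in $C_T$. The set of models of $T$ omitting all of $\set{\Sigma_k}_{k < \w}$ is then a countable intersection of dense $G_\delta$ subsets of $C_T$. Since $Y^\w$ is completely Baire, $C_T$ is Baire, so the intersection is nonempty; hence $L_Y$ satisfies the OTT.

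For the game OTT: the naturally associated game on $L_Y$ corresponds, under the same dictionary, to a Banach--Mazur-type game on $C_T$, and by a standard reduction to the Banach--Mazur game on $Y$ itself. By Choquet's theorem together with the refinements of Saint-Raymond, Non-empty has a winning strategy in the Banach--Mazur game on a separable metrizable space if and only if that space contains a dense completely metrizable subspace. Since $Y$ has no such subspace, no winning strategy exists, and the game version of OTT fails for $L_Y$.

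The principal obstacle is setting up the Eagle--Tall dictionary precisely enough that ``non-isolated'' in the logical sense aligns with ``$F_\s$ with dense complement'' in the topological sense, and that the logical game projects in both directions to Banach--Mazur on $Y$. The coding of formulas must be arranged so that the $\w$\tss{th} power of $Y$ (rather than $Y$ itself or a single finite power) appears, which is what accommodates countably many types over arbitrary finite tuples of variables and makes ``completely Baire'' the exact condition needed; and strategies in the logical game must transfer to the topological game so that failure of weak $\alpha$-favorability of $Y$ yields failure of the game version of OTT.
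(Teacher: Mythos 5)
This theorem is not proved in the paper at all: it is quoted verbatim from Eagle and Tall \cite{ET}, and the paper's own contribution in this section is only the subsequent construction of the space $Y$ in ZFC. So there is no in-paper argument to compare yours against, and your proposal, as written, does not fill that role either. Everything that would constitute the actual proof --- the definition of the abstract logic $L_Y$ from the space $Y$, the precise translation of theories to closed subspaces of a power of $Y$, of non-isolated types to meager sets, and of the game version of the {\bf OTT} to a Banach--Mazur-type game --- is deferred to an unspecified ``Eagle--Tall dictionary,'' and you yourself flag setting up that dictionary as the ``principal obstacle.'' A proof proposal whose only content is ``invoke the construction that makes the theorem true'' is a citation, not a proof; if the intent is to cite \cite{ET}, that is legitimate (it is what the paper does), but then the verification paragraphs add nothing that is actually checked.

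Two more concrete issues with the sketch. First, in the game half you reduce to the Banach--Mazur game on $Y$ itself and invoke ``$Y$ has no dense completely metrizable subspace,'' but the hypothesis of the theorem concerns $Y^\omega$, not $Y$. The implication you need does go through (a dense completely metrizable $D \subseteq Y$ would give the dense completely metrizable $D^\omega \subseteq Y^\omega$, so the hypothesis on $Y^\omega$ passes down to $Y$), but the natural object on which the logical game lives is the model space, which corresponds to $Y^\omega$; the hypothesis is stated about $Y^\omega$ precisely so that no reduction to $Y$ is needed, and your detour through $Y$ would have to be justified in the other direction as well if the game genuinely lives on the power. Second, the characterization ``Nonempty wins the Banach--Mazur game on a metrizable space iff the space contains a dense completely metrizable subspace'' is correct and is indeed the right tool, but it is doing all the work in your second half and deserves an explicit reference rather than an allusion to Choquet and Saint-Raymond.
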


They then 
note that a {\it non-meager $P$-filter} is such a $Y$, but such a filter is not known to exist in ZFC, although its existence follows from a variety of consistent set-theoretic hypotheses.

\begin{thrm}
Such a space and hence such a logic exist in ZFC.
\end{thrm}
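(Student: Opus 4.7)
The plan is to construct a separable metrizable $Y$ in ZFC such that $Y^{\w}$ is completely Baire but contains no dense completely metrizable subspace. By the Hurewicz criterion cited in the proof of Theorem 3.1, $Y^{\w}$ being completely Baire for a separable metric space amounts to $Y^{\w}$ containing no closed copy of $\Q$; and a dense completely metrizable subspace of such a space is the same as a dense Polish \gdelta{} subspace.

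My approach is a transfinite diagonalization producing $Y\subseteq\R$ (equivalently $X=\R\setminus Y$), while taking $\Q\subseteq Y$ so that $Y$ is dense in $\R$. Enumerate all countable dense-in-themselves subsets of $\R^{\w}$ with perfect $\R^{\w}$-closure as $\langle Q_\alpha\st\alpha<\mf{c}\rangle$ and all uncountable analytic subsets of $\R$ as $\langle A_\alpha\st\alpha<\mf{c}\rangle$. At stage $\alpha$: (i)~reserve a point $p_\alpha\in\bar{Q_\alpha}^{\R^{\w}}\setminus Q_\alpha$ whose coordinates all lie outside the reals already committed to $X$, with the promise that none of its coordinates is ever added to $X$; (ii)~place into $X$ a point $x_\alpha\in A_\alpha$ that is not among the coordinates of $\{p_\beta\st\beta\le\alpha\}$ and not in $\Q$. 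Finally set $Y=\R\setminus X$.

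For the completely Baire conclusion: any closed copy $Q$ of $\Q$ in $Y^{\w}$ equals some $Q_\alpha$, and the reserved $p_\alpha$ lies in $\bar{Q}^{\R^{\w}}\setminus Q$ with every coordinate in $Y$, so $p_\alpha\in Y^{\w}$ is a limit of $Q$ outside $Q$, contradicting $Q$'s closedness in $Y^{\w}$. For the no-dense-Polish conclusion: a dense Polish $P\subseteq Y^{\w}$ has each projection $\pi_i(P)$ analytic in $\R$ and contained in $Y$; since no uncountable analytic subset of $\R$ is contained in $Y$ (by (ii)), the perfect set theorem for analytic sets forces each $\pi_i(P)$ countable, and then dense in $\R$. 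Hence $P\subseteq\prod_i\pi_i(P)$, a countable product of countable dense-in-themselves subsets of $\R$, which by Sierpi\'nski's theorem is homeomorphic to $\Q$ and hence not Baire---contradicting that $P$ is Polish and dense in $\prod_i\pi_i(P)$.

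The main obstacle is executing step~(i) in genuine ZFC: at stage $\alpha$ one must find $p_\alpha\in\bar{Q_\alpha}^{\R^{\w}}\setminus Q_\alpha$ whose countably many coordinates miss up to $|\alpha|$-many reals already placed in $X$. When $|\alpha|<\aleph_1$ this is immediate from the Baire category theorem in the Polish space $\bar{Q_\alpha}^{\R^{\w}}\setminus Q_\alpha$, since finitely many coordinate hyperplanes are nowhere dense; beyond $\aleph_1$, however, Baire category does not automatically suffice, and a substantive ZFC argument must intervene. The likely resolution is to replace the raw enumeration by a ZFC combinatorial gadget---such as a tower in $([\w]^{\w},\subseteq^*)$ or a scale in $(\w^{\w},\le^*)$---to control the reservation process so that the uncovered portion of $\bar{Q_\alpha}^{\R^{\w}}\setminus Q_\alpha$ remains nonempty for all $\alpha<\mf{c}$ without invoking MA or CH.
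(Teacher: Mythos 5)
Your outline has a genuine gap at exactly the point you flag, and flagging it does not close it. Step~(i) asks, at stage $\alpha$, for a point of the Polish space $\bar{Q_\alpha}\setminus Q_\alpha$ all of whose coordinates avoid the $|\alpha|$-many reals already placed in $X$; the set to be avoided is a union of $|\alpha|\cdot\w$ closed ``hyperplane sections,'' and even when each of these is nowhere dense (which itself requires an argument -- nothing prevents $\bar{Q_\alpha}\setminus Q_\alpha$ from being concentrated on a section $\set{x \st x_i = r}$ with $r \notin \pi_i[Q_\alpha]$), escaping $|\alpha|$-many nowhere dense sets for all $\alpha < \mf{c}$ is precisely the statement $\op{cov}(\mc{M}) = \mf{c}$, which is not a theorem of ZFC. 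The appeal to ``a tower or a scale'' is not an argument: those objects calibrate $\le^*$- and $\subseteq^*$-structure, and there is no visible mechanism by which they would steer a Bernstein-style hit-and-avoid recursion past the covering number of category. Since the theorem asserts existence in ZFC, this is a fatal missing step, not a technicality. (A smaller slip: $\prod_i \pi_i(P)$ is a countable \emph{product} of copies of $\Q$, hence of cardinality $\mf{c}$ and not homeomorphic to $\Q$; the non-Baire conclusion survives because $\Q^\w$ is still meager in itself, being covered by the closed nowhere dense sets $\set{x \st x_0 = q}$, $q \in \Q$.)

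The paper avoids transfinite recursion entirely. It takes a single explicit ZFC object -- the Menger, non-meager filter $X \subseteq 2^\w$ extending the Fr\'echet filter constructed by Repov\v{s}, Zdomskyy, and Zhang -- and sets $Y = 2^\w \setminus X$. Complete Baireness of $Y^\w$ follows from Lemma 5.4: $(2^\w)^\w \setminus Y^\w$ is a countable union of products each having one Menger factor $X$ and the rest compact, hence is Menger, and Theorem 3.1 then applies. The non-existence of a dense completely metrizable $P \subseteq Y^\w$ is obtained by projecting $P$ to the first coordinate, invoking the Baire property of the analytic image, and using the invariance of the filter $X$ under finite modifications to spread a dense \gdelta{} missing $X$ over all of $2^\w$, contradicting non-meagerness of $X$. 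Note in particular that the paper's $Y$ is \emph{not} totally imperfect -- it contains perfect sets -- so the mechanism blocking dense Polish subspaces is the filter's homogeneity and non-meagerness, not a perfect-set obstruction; your plan to make $Y$ avoid every uncountable analytic set is a much stronger demand and is the source of the unprovable recursion. If you want a construction-style argument, you should look for a ZFC-definable object with built-in homogeneity, as the paper does, rather than a length-$\mf{c}$ diagonalization.
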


We shall exploit the work of Repov\v{s}, Zdomskyy, and Zhang, 2014.

\begin{cpropn}[\cite{RZZ}]
There is a subspace $X$ of $2^\omega$ such that $X$ is Menger, non-meager, and is a filter extending the Fr\'echet filter. 
\end{cpropn}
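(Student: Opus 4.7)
The plan is to take $Y$ to be the space $X$ furnished by Proposition 5.4 — a Menger, non-meager filter on $\w$ extending the Fr\'echet filter, viewed as a subspace of $2^\w$ — and to verify the two hypotheses of Theorem 5.2 for this $Y$, namely that $Y^\w$ is completely Baire and contains no dense completely metrizable subspace. Theorem 5.2 then delivers the desired abstract logic in ZFC.

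\textbf{No dense completely metrizable subspace.} Recall Talagrand's theorem that a filter on $\w$ extending Fr\'echet has the Baire property iff it is meager. Since $Y$ is non-meager, it lacks the Baire property; in particular it is neither meager nor comeager, so both $Y$ and $2^\w \setminus Y$ are non-meager in $2^\w$. Suppose for contradiction that $D \subseteq Y^\w$ is a dense, completely metrizable subspace. Since $Y$ is dense in $2^\w$ (it contains every cofinite set), $D$ is dense in the Polish space $(2^\w)^\w$. Being separable and completely metrizable, $D$ is Polish, hence $G_\delta$ in $(2^\w)^\w$, so dense $G_\delta$, hence comeager. On the other hand, $(2^\w)^\w \setminus Y^\w \supseteq \pi_0^{-1}(2^\w \setminus Y)$ is non-meager by Kuratowski--Ulam, contradicting $D \subseteq Y^\w$.

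\textbf{Completely Baire.} I would apply the Hurewicz characterization invoked in the proof of Theorem 3.1: a metrizable space is completely Baire iff it contains no closed copy of $\Q$. Assume for contradiction that $Q \subseteq Y^\w$ is a closed copy of the rationals, and let $\bar Q$ denote its closure in $(2^\w)^\w$. Mimicking the argument of Theorem 3.1, $\bar Q \setminus Q$ is a Polish, zero-dimensional, nowhere locally compact $G_\delta$ in $(2^\w)^\w$, hence homeomorphic to the irrationals $\Qc$. Since $Q$ is closed in $Y^\w$, one has $\bar Q \cap Y^\w = Q$, so $\bar Q \setminus Q \subseteq (2^\w)^\w \setminus Y^\w = \bigcup_{n<\w} \pi_n^{-1}(2^\w \setminus Y)$. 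By Baire category inside this copy of $\Qc$, some $\pi_n^{-1}(2^\w \setminus Y)$ is somewhere dense in $\bar Q \setminus Q$, so one obtains a homeomorphic copy of $\Qc$ whose $n$-th projection lies entirely in $2^\w \setminus Y$; combined with the Menger (indeed, Hurewicz-style selection) behaviour of $Y$ supplied by \cite{RZZ}, this should be massaged into a closed non-Menger subspace of $Y$, contradicting the Menger property of $Y$.

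\textbf{Main obstacle.} The technically delicate step is the last one: converting the copy of $\Qc$ sitting in $(2^\w)^\w \setminus Y^\w$ into a bona fide violation of the Menger property of the filter $Y$. I expect this to require reading further into \cite{RZZ} beyond the statement of Proposition 5.4 and exploiting specific combinatorial features of the construction (productive Menger-ness, or the exact Hurewicz-style selection witnessed by the filter), in the spirit of the arguments used by Medini--Zdomskyy in \cite{MZ}. Once $Y^\w$ is shown to be completely Baire, the two verifications combine via Theorem 5.2 to yield the abstract logic of Theorem 5.3 without any set-theoretic assumption beyond ZFC.
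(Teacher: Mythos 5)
The statement you were asked to prove is Proposition 5.4 itself: the ZFC existence of a subspace of $2^\omega$ that is simultaneously Menger, non-meager, and a filter extending the Fr\'echet filter. This is a genuine combinatorial construction --- it is the content of \cite{RZZ}, quoted in the paper without proof --- and your proposal never attempts it: your opening sentence takes ``the space $X$ furnished by Proposition 5.4'' as given and proceeds to verify the hypotheses of Theorem 5.2. In other words, you have assumed the statement to be proved and written out (part of) the downstream application, Theorem 5.3, instead. As a proof of the proposition this is circular; nothing in your text bears on why such a filter exists.

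Even read as an attempt at Theorem 5.3, the proposal diverges from the paper at the decisive point. The paper's space is $Y = 2^\omega \setminus X$, the \emph{complement} of the Menger filter, not the filter itself. That choice is exactly what makes the completely Baire verification trivial: $(2^\omega)^\omega \setminus (2^\omega\setminus X)^\omega = \bigcup_{n<\omega}\pi_n^{-1}[X]$ is a countable union of Menger sets, hence Menger, and Theorem 3.1 then yields that $(2^\omega\setminus X)^\omega$ is completely Baire (Lemma 5.5). With your choice $Y = X$, this step is precisely the one you flag as the ``main obstacle,'' and there is no reason to expect it to close: Proposition 5.4 gives no control over closed copies of $\Q$ inside $X^\omega$, and the whole point of the paper's construction is to avoid needing a filter whose countable power is completely Baire (a non-meager $P$-filter would serve, but is not known to exist in ZFC). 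Your argument that $Y^\omega$ has no dense completely metrizable subspace is correct --- observing that such a subspace would be a dense $G_\delta$ in $(2^\omega)^\omega$ while the complement of $Y^\omega$ contains the non-meager set $\pi_0^{-1}[2^\omega\setminus Y]$ is in fact slicker than the paper's two-case analysis --- but it does not repair either the circularity or the missing completely Baire step.
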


{\bf $\bm {Y = 2^\omega\setminus X}$ is the desired space!} 

\smallskip 

First we prove: 
\begin{lemma}
If $X \subseteq 2^\omega$ is Menger, then $(2^\omega \setminus X)^\omega$ is completely Baire.
\end{lemma}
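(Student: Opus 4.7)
The plan is to adapt the proof of Theorem 3.1, exploiting the fact that $(2^\w)^\w$ is compact. Assume for contradiction that $(2^\w\setminus X)^\w$ is not completely Baire. By the Hurewicz characterization recalled in the proof of Theorem 3.1, it contains a closed copy $Q$ of $\Q$. Write $Y := 2^\w\setminus X$, let $\bar Q$ be the closure of $Q$ in the compact zero-dimensional Polish space $(2^\w)^\w$, and set $W := \bar Q\setminus Q$. Then $W$ is a $G_\delta$ in $\bar Q$ and hence Polish; it is nonempty because $\Q$ is not completely metrizable while $\bar Q$ is. Since $Q$ is closed in $Y^\w$ we have $\bar Q \cap Y^\w = Q$, so
\[
W \ \subseteq\ (2^\w)^\w \setminus Y^\w \ =\ \bigcup_{n<\w}\pi_n^{-1}(X),
\]
where $\pi_n\colon (2^\w)^\w\to 2^\w$ denotes the $n$th projection.

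I will derive the contradiction by showing that $W$ is simultaneously non-Menger and Menger. For the first, I check that $W$ is not \scpt: if $W=\bigcup_k K_k$ with each $K_k$ compact, then each $K_k$ is closed in $\bar Q$ and disjoint from the dense set $Q$, so $K_k$ is nowhere dense in $\bar Q$. Meanwhile $Q\cong\Q$ has no isolated points, so neither does $\bar Q$; hence every singleton is nowhere dense and the countable set $Q$ is itself meager. Combining, $\bar Q = Q\cup\bigcup_k K_k$ would be meager in itself, contradicting Baire. Hurewicz's theorem that analytic Menger sets are \scpt{} then gives that $W$ is not Menger.

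For the second, set $H_n := \bar Q\cap\pi_n^{-1}(X)$. Since $Q\subseteq Y^\w$ is disjoint from each $\pi_n^{-1}(X)$, we have $H_n\subseteq W$ and $W = \bigcup_n H_n$. Now $\pi_n^{-1}(X)$ is homeomorphic to $X\times (2^\w)^\w$, which is Menger because the product of a Menger space with a compact space is Menger; and $H_n$ is closed in $\pi_n^{-1}(X)$, hence itself Menger. A standard argument---split any given sequence of open covers of $W$ into infinitely many infinite blocks, apply Menger to $H_n$ against the $n$th block, and amalgamate the resulting finite selections---shows that countable unions of Menger subspaces are Menger. Therefore $W$ is Menger, yielding the desired contradiction.

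The only slightly delicate point is the second step, where the compactness of $(2^\w)^\w$ is used essentially: without it the product-with-compact lemma would not apply to make each $\pi_n^{-1}(X)$ Menger, and the decomposition $W=\bigcup_n H_n$ would not consist of Menger pieces. Everything else is routine Baire-category bookkeeping inside the compact Polish space $\bar Q$ together with a standard closure property of the Menger selection principle.
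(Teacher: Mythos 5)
Your proof is correct and follows essentially the same route as the paper: the paper reduces the lemma to showing that $(2^\omega)^\omega \setminus (2^\omega\setminus X)^\omega = \bigcup_n \pi_n^{-1}(X)$ is Menger (each piece being Menger as a product of $X$ with a compact space, and countable unions of Menger spaces being Menger), and then invokes the argument of Theorem 3.1 transplanted to $(2^\omega)^\omega$. You have merely inlined that second step---replacing the identification of $\bar Q\setminus Q$ with the irrationals by a direct Baire-category argument that $\bar Q\setminus Q$ is Polish but not \scpt{}---which is a harmless variation.
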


\begin{proof}
It suffices to prove that $(2^\omega)^\omega \setminus (2^\omega\setminus X)^\omega$ is Menger. 
$(2^\omega)^\omega \setminus (2^\omega \setminus X)^\omega = \bigcup_{n < \omega} Y_n$, where 
\begin{gather*}
Y_n = \prod_{k < \omega} Y_{n,k}, \\
Y_{n,k} = \begin{cases}X & \text{for }n = k \\ 2^\omega & \text{for } n \ne k\end{cases}
\end{gather*}
Each $Y_n$ is Menger, so $(2^\omega)^\omega \setminus (2^\omega\setminus X)^\omega$ is Menger, because a countable union of Menger spaces is Menger.
\end{proof}

To prove $Y^\omega = (2^\omega \setminus X)^\omega$ has no completely metrizable dense subspace, suppose on the contrary that it has such a subspace $P$. 
Let $Z = \pi_1(P)$ be the projection onto the first coordinate of the product. 
Then $Z$ is dense in $Y$.
Since $P$ is \gdelta{} in the separable metrizable space $Y^\omega$, $Z$ is analytic and therefore has the {\bf Baire Property}, i.e.~$Z = U \triangle N$, where $U$ is open and $N$ is meager. 
We divide into cases, depending on whether or not $Z$ is meager. 
Either case leads to a contradiction.

\smallskip
\noindent {\it Case 1}: $Z$ is meager in $Y$.

Then, considering $Y^\omega$ as $Y\times Y^\omega$, we have $Z\times Y^\omega$ is dense in $Y^\omega$. But since $Z$ is meager in $Y$, $Z \times Y^\omega$ is meager in $Y^\omega$. But $P \subseteq Z \times Y^\omega$, and a meager set can't include a dense Polish subspace.

\smallskip
\noindent {\it Case 2}: 
Then since $Z$ has the Baire Property there is a non-empty open $U \subseteq Y$ and a dense $G_\delta$ T in $U$ such that $T \subseteq Z$ \cite[8.26]{Kec95}.
Without loss of generality, let $U$ be $Y \cap [s]$, where $[s]$ is basic open in $2^\omega$, 
$[s] = \set{f \in 2^\omega \text{ : } f|n = s}$, where $s \in 2^n$.
Let us enumerate $2^n = \set{s_i \text{ : } i \le 2^n}$. 
Without loss of generality, $s = s_0$. For $i \ne 0$, let $\varphi_i:[s_0]\to[s_i]$ be defined by 
\[\varphi_i(y) = s_i^n(y\!\upharpoonright\![n,+\infty)).\]
Then observe that $\varphi_i[[s_0 \cap X]] = [s_i]\cap X$, because $X$ is a filter extending the Fr\'echet filter, and so is closed under finite modifications. 
Let $T_0 = T$. For $0 < i \le 2^n$, let $T_i = \varphi_i[T]$. Then $T_i$ is a dense $G_\delta$ in $[s_i]$, since $\varphi_i$ is a homeomorphism.
Moreover, $T_i \cap X = \varnothing$. 
Then $R = \bigcup_{i \le 2^n} T_i$ is a \gdelta{} in $2^\omega$ and is dense there. But $R \cap X = \varnothing$, contradicting $X$ being non-meager. \qed

\begin{rmk*}
The complement of a completely Baire subset of $\R$ need not be Menger. 
A Bernstein set $B$ and its complement are both completely Baire.
We shall show that $B$ has a non-Menger closed subspace and so is not Menger.
Let $\mathbb{K}$ be the Cantor set. 
Since both $B$ and $\mathbb{K}\setminus B$ are Bernstein in $\mathbb{K}$, 
$\mathbb{K}\setminus B$ is dense in $\mathbb{K}$.
Fix a countable dense set $Q \subseteq \mathbb{K}\setminus B$. 
Then $\mathbb{K}\setminus Q$ is a copy of the space of irrationals.
$B \cap \mathbb{K} = B \cap (\mathbb{K}\setminus Q)$ is closed in $B$ 
and is a Bernstein subset of $\mathbb{K}\setminus Q$.
But $\mathbb{K}\setminus Q$ is homeomorphic to $^\omega\omega$.
It remains to quote:
\end{rmk*}

\begin{clemma}[\cite{BTZ}]
Bernstein subsets of $^\omega\omega$ are not Menger. \qed
\end{clemma}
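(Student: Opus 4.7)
The plan is to exhibit a single sequence of open covers of $B$ admitting no Menger selection, drawing only on the fact that a Bernstein set meets every perfect subset of $^\omega\omega$. Concretely, for each $n < \w$ I would introduce the clopen cover
\[ \mc{U}_n = \set{U^k_n \st k < \w}, \qquad U^k_n = \set{f \in {}^\w\w \st f(n) \le k}, \]
and apply the Menger definition to $\set{\mc{U}_n}_{n < \w}$.

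Given any selection of finite subfamilies $\mc{V}_n \subseteq \mc{U}_n$, let $k_n = \max\set{k \st U^k_n \in \mc{V}_n}$, taking $k_n = 0$ if $\mc{V}_n = \varnothing$. Then $\bigcup \mc{V}_n = U^{k_n}_n$, so the complement in $^\omega\omega$ of $\bigcup_{n < \w} \bigcup \mc{V}_n$ equals
\[ P = \prod_{n < \w} \set{j \in \w \st j > k_n}. \]
Each factor is a countably infinite discrete space, so $P$ is homeomorphic to $^\omega\omega$ and in particular is perfect. The Bernstein property of $B$ then supplies $f \in B \cap P$, and this $f$ is not covered by $\bigcup_n \bigcup \mc{V}_n$. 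Since the selection was arbitrary, $\set{\mc{U}_n}_{n < \w}$ witnesses the failure of the Menger property on $B$.

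I do not anticipate any real obstacle in this plan: beyond the definition of Menger, its sole ingredient is the routine observation that the set of functions pointwise dominating any prescribed $g \in {}^\w\w$ is a closed copy of $^\omega\omega$. The underlying fact is essentially the Hurewicz characterization that Menger subspaces of $^\w\w$ are not dominating in the $\le^*$ preorder, specialized here to the Bernstein setting, where dominating perfect sets are immediate.
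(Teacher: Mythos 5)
Your proof is correct. The paper does not actually prove this lemma --- it is quoted from \cite{BTZ} with the proof deferred to that reference --- so there is nothing of the paper's to compare against; what you give is the standard self-contained argument: the Menger property yields a $g \in {}^\w\w$ (here $g(n) = k_n$) such that every member of $B$ falls below $g$ at some coordinate, while the Bernstein property forces $B$ to meet the uncountable closed (indeed perfect) set $\prod_{n<\w}\set{j \st j > k_n}$ of functions everywhere above $g$. The only cosmetic slip is that when $\mc{V}_n = \varnothing$ one has $\bigcup \mc{V}_n = \varnothing \subsetneq U^{k_n}_n$ rather than equality, but the inclusion $\bigcup \mc{V}_n \subseteq U^{k_n}_n$ is all the argument needs.
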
 

\bibliographystyle{alpha}

\medskip

{\small 
Franklin D. Tall, Department of Mathematics, University of Toronto, Toronto, Ontario, M5S 2E4, CANADA

{\it e-mail address:} {\tt f.tall@utoronto.ca}

Lyubomyr Zdomskyy, Institut für Diskrete Mathematik und Geometrie, Technische Universität Wien; Wiedner Hauptstra{\ss}e 8-10/104, 1040 Wien, AUSTRIA

{\it e-mail address:} {\tt lzdomsky@gmail.com}
}

\end{document}